\documentclass[sn-aps, Numbered]{sn-jnl}

\usepackage{graphicx}%
\usepackage{multirow}%
\usepackage{amsmath,amssymb,amsfonts}%
\usepackage{amsthm}%
\usepackage{mathrsfs}%
\usepackage[title]{appendix}%
\usepackage{xcolor}%
\usepackage{textcomp}%
\usepackage{manyfoot}%
\usepackage{booktabs}%
\usepackage{algorithm}%
\usepackage{algorithmicx}%
\usepackage{algpseudocode}%
\usepackage{listings}%
\usepackage{subcaption}    
\usepackage{enumitem}
\usepackage{graphicx}
\usepackage{subcaption}
\usepackage{amsthm}

\newtheorem{assumption}{Assumption}
\theoremstyle{plain}%
\newtheorem{theorem}{Theorem}
\newtheorem{corollary}{Corollary}
\newtheorem{lemma}{Lemma}
\newtheorem{proposition}{Proposition}%
\newtheorem{remark}{Remark}%
\theoremstyle{thmstyletwo}%

\theoremstyle{thmstylethree}%
\newtheorem{definition}{Definition}%

\begin{document}

\title[Article Title]{A Time-Varying Coefficient-Based Fixed-Time Neurodynamic Approach for Generalized Monotone Inclusion Problems and Its Applications}

\author[1]{\fnm{Vajahat Karim}\sur{Khan}}\email{gh7904@myamu.ac.in}
\affil[1]{\orgdiv{Department of Mathematics}, \orgname{Aligarh Muslim University}, \city{Aligarh}, \postcode{202002}, \state{U.P.}, \country{India}}

\abstract{
This manuscript develops time-varying coefficients neurodynamic models (TVCNDMs) to solve the inclusion problem (IPs) $0 \in \mathcal{P}(w) + \mathcal{Q}(w)$, where $\mathcal{P}$ is a maximal set-valued and $\mathcal{Q}$ is a single-valued operator. The underlying operators are assumed to fulfill a generalized monotonicity condition that is less restrictive than the classical monotonicity assumption. Within this framework, existence and uniqueness of the model trajectories are established, and fixed-time ($\mathrm{FxT}$) convergence of the proposed TVCNDM to the solution of the IPs, along with explicit estimates of the settling time. Furthermore, the robustness of the proposed TVCNDM is investigated in the presence of bounded external disturbances, demonstrating that the $\mathrm{FxT}$ convergence. Numerical simulations support the theoretical results, illustrating rapid error decay and improved transient behavior achieved through the time-varying design.
}


%

\keywords{Generalized monotone inclusion, Time-varying coefficients of neurodynamic approach, Fixed-time stability Analysis.}


\maketitle

\section{Introduction}\label{ICFxTsec1}
In this work, we investigate inclusion problem (IP) of finding ${\overline{\mathfrak{w}}}^*\in \mathscr{H}$ such that
\begin{equation}\label{inc1}
0 \in \mathcal{P}({\overline{\mathfrak{w}}}^*) + \mathcal{Q}({\overline{\mathfrak{w}}}^*),
\end{equation}
where $\mathcal{P} : \mathscr{H} \to 2^{\mathscr{H}}$ is a set-valued and 
$\mathcal{Q} : \mathscr{H} \to \mathscr{H}$ is a single-valued operator.

The framework \eqref{inc1} provides a powerful and versatile modeling tool that captures constrained optimization problems, variational inequalities, equilibrium models, Nash equilibrium, and fixed-point formulations; see \cite{8,CY,16, ICd2}.

A classical framework for solving \eqref{inc1} is provided by the forward-backward splitting method and its extensions \cite{37,42,19,plosevk,39,49}. While these methods are well understood in discrete settings, continuous-time dynamical models, often referred to as neurodynamic approaches, have recently attracted considerable attention due to their low computational complexity and strong stability properties \cite{1,9,11,13,21,26}. Most existing works, however, primarily establish asymptotic or exponential convergence. In many practical scenarios, such as robotics, networked control, and signal processing, guaranteeing convergence within a finite and predictable time horizon is crucial. This requirement motivates the study of finite-time and fixed-time ($\mathrm{FxT}$) stability. Finite-time stability ensures convergence in finite time but typically depends on initial conditions. $\mathrm{FxT}$ stability, introduced by Polyakov \cite{43}, addresses this limitation by ensuring a uniform upper bound on the settling time that does not depend on the initial conditions.

Although $\mathrm{FxT}$ convergence has attracted considerable attention in optimization and control \cite{22,23,32,33,FTC}, existing proximal dynamical models and recent continuous-time optimization methods with $\mathrm{FxT}$ convergence are predominantly developed under constant-gain structures and classical monotonicity assumptions \cite{ROCK,NHVF,TVl1}.

This work develops a novel $\mathrm{FxT}$ neurodynamic scheme incorporating time-varying coefficients to solve the IP \eqref{inc1}. The proposed dynamical model employs continuously varying gains to flexibly regulate convergence rates along trajectories. Working within a generalized monotonicity framework that allows the monotonicity modulus to take negative values, we demonstrate that the model admits unique trajectories and prove that the equilibrium point corresponding to the solution is $\mathrm{FxT}$ stable. The derived explicit settling-time estimates ensure convergence within a fixed time, irrespective of the initial conditions.

Furthermore, we take time-varying coefficient neurodynamic model (TVCNDM) exhibits strong robustness against bounded external disturbances. The $\mathrm{FxT}$ convergence property is shown to be preserved under interference, ensuring reliable and stable performance. Numerical simulations are presented to validate theoretical developments, where error convergence responses confirm rapid decay of the state error and highlight improved transient behavior achieved through the time-varying design.

The key contributions of this paper are outlined as follows:
\begin{enumerate}
    \item A FxT neurodynamic model with time-varying coefficients for solving IP.
    \item Existence, uniqueness, and FxT convergence established under generalized monotonicity assumptions.
    \item Explicit settling-time bounds revealing controllability of convergence speed via time-varying gains.
    \item Robustness analysis under bounded disturbances showing preservation of FxT stability.
   
    \item Numerical simulations are conducted to validate the theoretical results, demonstrating rapid error decay and improved transient performance achieved by the time-varying design.
\end{enumerate}

The remainder of the paper is organized as follows. Section~\ref{ICFxTsec2} presents preliminary concepts and technical tools. Section~\ref{ICFxTsec3} introduces the proposed neurodynamic models and establishes FxT convergence. Section~\ref{ICFxTsec4} investigates robustness under disturbances. Further, numerical simulations illustrating the effectiveness of the approach provides in section~\ref{ICFxTsec5}. Finally, Section~\ref{ICFxTsec6} concludes the paper.

\section{Preliminary}\label{ICFxTsec2}
Suppose that $\mathscr{H}$ is a finite-dimensional real Hilbert space with inner product $\langle \cdot,\cdot \rangle$ and norm $\|\cdot\|$ induced by the inner product. Let a proper, convex, and lower semicontinuous (l.s.c.) function 
$\Gamma:\mathscr{H}\to\mathbb{R}\cup\{+\infty\}$ and if the set
\[
\partial\phi(w)
=
\Big\{
u\in \mathscr{H}:\;
\Gamma(v)\ge\phi(w)+\langle u,v-w\rangle,
\ \forall v\in \mathscr{H}
\Big\}
\]
is nonempty. Then the set $\partial\phi(w)$ is siad to be  
\emph{subdifferential} of $\Gamma$ at $w\in\mathscr{H}$, and any element 
$u\in\partial\phi(w)$ is referred to as a \emph{subgradient} of $\Gamma$ at $w$.

Suppose $\Gamma$ is convex, l.s.c., and defined on all of $\mathscr{H}$. Then $\Gamma$ is continuous on $\mathscr{H}$ 
\cite[Corollary~8.30]{BauschkeCombettes}, and consequently subdifferentiable everywhere on $\mathscr{H}$ 
\cite[Proposition~16.14]{BauschkeCombettes}. 

Furthermore, let $\Upsilon:\mathscr{H}\to\mathbb{R}\cup\{+\infty\}$ be another l.s.c. function, which is proper and convex. Also, satisfying 
$\mathrm{dom}\,\Upsilon\cap\mathrm{int\,dom}\,\Gamma\neq\emptyset$ or $\mathrm{dom}\,\Gamma=\mathscr{H}$, where 
$\mathrm{dom}\,\Upsilon=\{w\in\mathscr{H}:\Upsilon(w)<+\infty\}$. 
Then the subdifferential sum rule holds:
\(
\partial(\Upsilon+\Gamma)=\partial \Upsilon+\partial\phi,
\)
see \cite{BauschkeCombettes}.

Let $\mathcal{C}\subset \mathscr{H}$ be a nonempty, closed, and convex set. 
The normal cone to $\mathcal{C}$ at a point $w\in \mathcal{C}$ is defined as
\[
\mathcal{N}_{\mathcal{C}}(w)
=
\Big\{
v\in \mathscr{H}:\;
\langle v, z-w\rangle \le 0,
\ \forall z\in \mathcal{C}
\Big\},
\]
while $\mathcal{N}_{\mathcal{C}}(w)=\emptyset$ whenever $w\notin \mathcal{C}$. If we take
\[
i_{\mathcal{C}}(w)=
\begin{cases}
0, & w\in \mathcal{C},\\
+\infty, & w\notin \mathcal{C}.
\end{cases}
\]
Then, it is callled indicator function of $\mathcal{C}$. It is well known that the subdifferential of the indicator function coincides with the normal cone, that is,
\(
\partial i_{\mathcal{C}}(w)=\mathcal{N}_{\mathcal{C}}(w), \ \forall w\in \mathscr{H}.
\)
For any $w\in \mathscr{H}$, the metric projection of $w$ onto $\mathcal{C}$ is defined by
\[
\mathcal{J}_{\mathcal{C}}(w)
=
\arg\min_{v\in \mathcal{C}}\|v-w\|.
\]
When $\mathcal{C}$ is closed and convex, the projection $\mathcal{J}_{\mathcal{C}}(w)$ is uniquely well-defined.

Let $\Upsilon:\mathscr{H}\to\mathbb{R}\cup\{+\infty\}$. Given $\mu>0$, the proximity operator of $\Upsilon$ with parameter $\mu$ is defined, for each $w\in\mathscr{H}$, as
\[
\mathrm{prox}_{\mu \Upsilon}(w)
=
\arg\min_{v\in \mathscr{H}}
\left\{
\Upsilon(v)+\frac{1}{2\mu}\|w-v\|^2
\right\}.
\]
Further details on these notions can be found in \cite{BauschkeCombettes}.

\subsection{Monotone Operators}
Let $\mathcal{P}:\mathscr{H}\to 2^{\mathscr{H}}$ be a set-valued operator. 
Its domain is defined by
\[
\mathrm{dom}\,\mathcal{P}
=
\{w\in\mathscr{H}:\mathcal{P}(w)\neq\emptyset\},
\]
and its graph is given by
\[
\mathrm{Gr}(\mathcal{P})
=
\{(w,v)\in \mathscr{H}\times \mathscr{H}: v\in \mathcal{P}(w)\}.
\]

\begin{definition}
The operator $\mathcal{P}:\mathscr{H}\to 2^{\mathscr{H}}$ is called 
\emph{$\Pi_{\mathcal{P}}$-monotone} if $\exists$ a constant 
$\Pi_{\mathcal{P}}\in\mathbb{R}$ s. t.
\[
\langle w-y,\,u-v\rangle
\ge
\Pi_{\mathcal{P}}\|w-y\|^2,
\quad
\forall w,y\in \mathscr{H},\ 
u\in \mathcal{P}(w),\ 
v\in \mathcal{P}(y).
\]
\end{definition}

\begin{definition}\cite{Minh}
A $\Pi_{\mathcal{P}}$-monotone operator $\mathcal{P}$ is maximal if no proper $\Pi_{\mathcal{P}}$-monotone extension of $\mathcal{P}$ exists.\\
\end{definition}

\begin{remark}
The parameter $\Pi_{\mathcal{P}}$ may be negative. 
The case $\Pi_{\mathcal{P}}=0$ corresponds to classical monotonicity, 
$\Pi_{\mathcal{P}}>0$ corresponds to strong monotonicity, 
and $\Pi_{\mathcal{P}}<0$ describes weak monotonicity.
\end{remark}

The subdifferential $\partial \Upsilon$ of a proper, convex, and lower semicontinuous function $\Upsilon:\mathscr{H}\to(-\infty,+\infty]$, with $\Upsilon \not\equiv +\infty$, is a classical example of a maximal monotone operator \cite{ROCK}.\\

\begin{definition}
We say that $\mathcal{Q}:\mathscr{H}\to\mathscr{H}$ is
\begin{itemize}
    \item Lipschitz continuous with $ \xi\ge 0$ if
    \[
    \|\mathcal{Q}(w_1)-\mathcal{Q}(w_2)\|\le  \xi\|w_1-w_2\|,
    \qquad \forall w_1,w_2\in\mathscr{H};
    \]
    \item \emph{$\zeta$-cocoercive} if
    \[
    \langle \mathcal{Q}(w_1)-\mathcal{Q}(w_2),\,w_1-w_2\rangle
    \ge
    \zeta\|\mathcal{Q}(w_1)-\mathcal{Q}(w_2)\|^2,
    \qquad \forall w_1,w_2\in\mathscr{H}.
    \]
\end{itemize}
\end{definition}

By the Cauchy--Schwarz inequality, every $\zeta$-cocoercive operator is $\frac{1}{\zeta}$-Lipschitz continuous.

The resolvent of a set-valued operator $\mathcal{P}:\mathscr{H}\to 2^{\mathscr{H}}$ with parameter $\mu>0$ is defined as
\[
\mathscr{R}_{\mu \mathcal{P}}
=
(\mathcal{I}+\mu \mathcal{P})^{-1},
\]
where $\mathcal{I}$ denotes the identity operator.\\

\begin{lemma}\label{GENMON}\cite{Minh}
Let $\mathcal{P}:\mathscr{H}\to 2^{\mathscr{H}}$ be a $\Pi_{\mathcal{P}}$-monotone set-valued operator and $\mathcal{Q}:\mathscr{H}\to\mathscr{H}$ be a single-valued mapping. 
Assume that $\mu>0$ satisfies $1+\mu\Pi_{\mathcal{P}}>0$. Consequently, we have:
\begin{enumerate}[label=\textnormal{(\roman*)}]
    \item  $\mathscr{R}_{\mu\mathcal{P}}$ is single-valued.
    \item $\mathscr{R}_{\mu\mathcal{P}}$ is $(1+\mu\Pi_{\mathcal{P}})$-cocoercive.
    \item $\mathrm{dom}\,\mathscr{R}_{\mu\mathcal{P}}=\mathscr{H}$ $\iff$ $\mathcal{P}$ is maximal $\Pi_{\mathcal{P}}$-monotone.\\
\end{enumerate}
\end{lemma}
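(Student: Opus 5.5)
To prove Lemma~\ref{GENMON}, I will work directly from the definition of $\Pi_{\mathcal{P}}$-monotonicity and the identity $u\in\mathscr{R}_{\mu\mathcal{P}}(w)\iff \frac{w-u}{\mu}\in\mathcal{P}(u)$. The mapping $\mathcal{Q}$ plays no role in the three claims, so it can be ignored.

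\emph{Parts (i) and (ii).} Take $w_1,w_2\in\mathrm{dom}\,\mathscr{R}_{\mu\mathcal{P}}$ and any $u_i\in\mathscr{R}_{\mu\mathcal{P}}(w_i)$. Then $(w_i-u_i)/\mu\in\mathcal{P}(u_i)$, and the $\Pi_{\mathcal{P}}$-monotonicity inequality gives
\[
\Big\langle u_1-u_2,\tfrac{1}{\mu}\big((w_1-w_2)-(u_1-u_2)\big)\Big\rangle\ge \Pi_{\mathcal{P}}\|u_1-u_2\|^2 .
\]
Multiplying by $\mu>0$ and rearranging yields
\[
\langle u_1-u_2,w_1-w_2\rangle\ge (1+\mu\Pi_{\mathcal{P}})\|u_1-u_2\|^2 .
\]
If $w_1=w_2$, the left side vanishes, and since $1+\mu\Pi_{\mathcal{P}}>0$ we get $u_1=u_2$. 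This is (i). With single-valuedness established, the same inequality is exactly the $(1+\mu\Pi_{\mathcal{P}})$-cocoercivity of $\mathscr{R}_{\mu\mathcal{P}}$ in the sense of the Definition, which is (ii).

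\emph{Part (iii).} The idea is to reduce to the classical Minty theorem. Set $\mathcal{B}=\mathcal{P}-\Pi_{\mathcal{P}}\mathcal{I}$, which is monotone. Moreover, $\mathcal{P}$ is maximal $\Pi_{\mathcal{P}}$-monotone if and only if $\mathcal{B}$ is maximal monotone, because $\Pi_{\mathcal{P}}$-monotone extensions of $\mathcal{P}$ correspond bijectively to monotone extensions of $\mathcal{B}$ under the graph shift $(w,v)\mapsto(w,v-\Pi_{\mathcal{P}}w)$. Next observe that
\[
\mathcal{I}+\mu\mathcal{P}=(1+\mu\Pi_{\mathcal{P}})\Big(\mathcal{I}+\tfrac{\mu}{1+\mu\Pi_{\mathcal{P}}}\mathcal{B}\Big).
\]
Hence $\mathrm{ran}(\mathcal{I}+\mu\mathcal{P})=\mathscr{H}$ if and only if $\mathrm{ran}(\mathcal{I}+\lambda\mathcal{B})=\mathscr{H}$, where $\lambda=\mu/(1+\mu\Pi_{\mathcal{P}})>0$. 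Minty's theorem \cite{BauschkeCombettes} states that a monotone $\mathcal{B}$ is maximal if and only if $\mathrm{ran}(\mathcal{I}+\lambda\mathcal{B})=\mathscr{H}$ for some (equivalently, every) $\lambda>0$. Combining these equivalences with $\mathrm{dom}\,\mathscr{R}_{\mu\mathcal{P}}=\mathrm{ran}(\mathcal{I}+\mu\mathcal{P})$ gives (iii).

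The main care point is the maximality correspondence in (iii). One must check that a proper $\Pi_{\mathcal{P}}$-monotone extension of $\mathcal{P}$ shifts to a proper monotone extension of $\mathcal{B}$, and conversely. This follows because the shift is a graph bijection that preserves inclusion and transforms the defining inequalities into each other. The positivity of $1+\mu\Pi_{\mathcal{P}}$ is what keeps $\lambda>0$, which is required to apply Minty's theorem.
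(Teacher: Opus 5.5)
Your proof is correct and complete. The inequality $\langle u_1-u_2,w_1-w_2\rangle\ge(1+\mu\Pi_{\mathcal{P}})\|u_1-u_2\|^2$ gives (i) and (ii) on $\mathrm{dom}\,\mathscr{R}_{\mu\mathcal{P}}$, and the shift to the monotone operator $\mathcal{P}-\Pi_{\mathcal{P}}\mathcal{I}$, together with the factorization $\mathcal{I}+\mu\mathcal{P}=(1+\mu\Pi_{\mathcal{P}})\bigl(\mathcal{I}+\tfrac{\mu}{1+\mu\Pi_{\mathcal{P}}}(\mathcal{P}-\Pi_{\mathcal{P}}\mathcal{I})\bigr)$, reduces (iii) to Minty's theorem. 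The paper does not prove this lemma; it imports it from the cited reference, and your argument is the standard proof of that result.
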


We denote by $\mathrm{Fix}(\mathcal{P})$ the set of fixed points of an operator $\mathcal{P}$ and by $\mathrm{zer}\,\mathcal{P}$ the set of its zeros.

\begin{lemma}\label{lem fixed point}\cite{NHVF} Suppose the operator $\mathcal{P}:\mathscr{H}\to 2^{\mathscr{H}}$ is $\Pi_{\mathcal{P}}$-monotone and $\mathcal{Q}:\mathscr{H}\to\mathscr{H}$ be single-valued. 
If $\mu>0$ satisfies $1+\mu\Pi_{\mathcal{P}}>0$, then a point ${\overline{\mathfrak{w}}}^*\in\mathscr{H}$ solves the IP \eqref{inc1} $\iff$
\[
\mathscr{R}_{\mu\mathcal{P}}({\overline{\mathfrak{w}}}^*-\mu\mathcal{Q}({\overline{\mathfrak{w}}}^*))={\overline{\mathfrak{w}}}^*,
\]
that is,
\[
{\overline{\mathfrak{w}}}^*\in \mathrm{Fix}\!\left(\mathscr{R}_{\mu\mathcal{P}}(\mathcal{I}-\mu\mathcal{Q})\right).
\]
\end{lemma}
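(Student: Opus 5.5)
The plan is to prove both implications by unwinding the definition of the resolvent $\mathscr{R}_{\mu\mathcal{P}}=(\mathcal{I}+\mu\mathcal{P})^{-1}$. Lemma~\ref{GENMON}(i) is used to make sense of the equality $\mathscr{R}_{\mu\mathcal{P}}(\cdot)={\overline{\mathfrak{w}}}^*$ as an identity between points rather than sets. The standing assumption $1+\mu\Pi_{\mathcal{P}}>0$ enters only through this single-valuedness. Maximality is not needed for the equivalence itself, since we only evaluate the resolvent at points where we can show it is defined.

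\emph{($\Rightarrow$)} Suppose $0\in\mathcal{P}({\overline{\mathfrak{w}}}^*)+\mathcal{Q}({\overline{\mathfrak{w}}}^*)$, so that $-\mathcal{Q}({\overline{\mathfrak{w}}}^*)\in\mathcal{P}({\overline{\mathfrak{w}}}^*)$. Multiplying by $\mu>0$ and adding ${\overline{\mathfrak{w}}}^*$ to both sides gives
\[
{\overline{\mathfrak{w}}}^*-\mu\mathcal{Q}({\overline{\mathfrak{w}}}^*)\in(\mathcal{I}+\mu\mathcal{P})({\overline{\mathfrak{w}}}^*).
\]
By the definition of the inverse of a set-valued map, this says ${\overline{\mathfrak{w}}}^*\in\mathscr{R}_{\mu\mathcal{P}}\big({\overline{\mathfrak{w}}}^*-\mu\mathcal{Q}({\overline{\mathfrak{w}}}^*)\big)$. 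In particular, the point ${\overline{\mathfrak{w}}}^*-\mu\mathcal{Q}({\overline{\mathfrak{w}}}^*)$ lies in $\mathrm{dom}\,\mathscr{R}_{\mu\mathcal{P}}$. By Lemma~\ref{GENMON}(i) the resolvent is single-valued there, so the inclusion becomes the equality $\mathscr{R}_{\mu\mathcal{P}}({\overline{\mathfrak{w}}}^*-\mu\mathcal{Q}({\overline{\mathfrak{w}}}^*))={\overline{\mathfrak{w}}}^*$.

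\emph{($\Leftarrow$)} If $\mathscr{R}_{\mu\mathcal{P}}({\overline{\mathfrak{w}}}^*-\mu\mathcal{Q}({\overline{\mathfrak{w}}}^*))={\overline{\mathfrak{w}}}^*$, then by the definition of the inverse,
\[
{\overline{\mathfrak{w}}}^*-\mu\mathcal{Q}({\overline{\mathfrak{w}}}^*)\in {\overline{\mathfrak{w}}}^*+\mu\mathcal{P}({\overline{\mathfrak{w}}}^*).
\]
Cancelling ${\overline{\mathfrak{w}}}^*$ and dividing by $\mu>0$ gives $-\mathcal{Q}({\overline{\mathfrak{w}}}^*)\in\mathcal{P}({\overline{\mathfrak{w}}}^*)$, which is exactly \eqref{inc1}. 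The fixed-point formulation is the same statement rewritten with the composition $\mathscr{R}_{\mu\mathcal{P}}(\mathcal{I}-\mu\mathcal{Q})$.

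The only delicate point is the justification of single-valuedness. If one prefers not to cite Lemma~\ref{GENMON}(i) directly, it follows from $\Pi_{\mathcal{P}}$-monotonicity. Suppose $z\in(\mathcal{I}+\mu\mathcal{P})(x_1)\cap(\mathcal{I}+\mu\mathcal{P})(x_2)$. Then $(z-x_i)/\mu\in\mathcal{P}(x_i)$ for $i=1,2$. Applying the monotonicity inequality to these two elements gives
\[
-\tfrac{1}{\mu}\|x_1-x_2\|^2\ge\Pi_{\mathcal{P}}\|x_1-x_2\|^2,
\]
that is, $(1+\mu\Pi_{\mathcal{P}})\|x_1-x_2\|^2\le0$. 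Since $1+\mu\Pi_{\mathcal{P}}>0$, this forces $x_1=x_2$. I expect no other obstacle.
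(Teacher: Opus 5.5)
Your proof is correct. The paper itself gives no argument and simply cites \cite{NHVF}, where the equivalence is obtained just as you do: unwind $\mathscr{R}_{\mu\mathcal{P}}=(\mathcal{I}+\mu\mathcal{P})^{-1}$ and use the single-valuedness of Lemma~\ref{GENMON}(i). Your self-contained derivation of single-valuedness via $(1+\mu\Pi_{\mathcal{P}})\|x_1-x_2\|^2\le 0$ is accurate, and so is your remark that maximality is not needed.
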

For further background on monotone operators and their applications in optimization and variational analysis, we refer the reader to \cite{AVR,BauschkeCombettes,Minh}. 
In what follows, we impose the standing assumption below.\\

\begin{assumption}\label{ICAsmmain}
The operator $\mathcal{P}$ is maximal $\Pi_{\mathcal{P}}$-monotone, and $\mathcal{Q}$ is $\Pi_{\mathcal{Q}}$-monotone as well as $\xi$-Lipschitz continuous. 
Moreover, $\mu>0$ is chosen s. t.
\[
1+\mu\Pi_{\mathcal{P}}>0,
\quad ; \quad
2(\Pi_{\mathcal{P}}+\Pi_{\mathcal{Q}})+\mu\Pi_{\mathcal{P}}^{2}-\mu  \xi^{2}>0.
\]
\end{assumption}
\begin{remark}
It follows from \cite{NHVF} that Assumption~\eqref{ICAsmmain} is valid under any of the following conditions:
\begin{enumerate}[label=\textnormal{(\roman*)}]
    \item $\Pi_{\mathcal{P}}+\Pi_{\mathcal{Q}}>0$.
    \item $\Pi_{\mathcal{P}}+\Pi_{\mathcal{Q}}=0 \ ; \ \Pi_{\mathcal{P}}^{2}> \xi^{2}$.
    \item $\Pi_{\mathcal{P}}+\Pi_{\mathcal{Q}}<0$ \ ; \ $\Pi_{\mathcal{P}}> \xi$.\\
\end{enumerate}
\end{remark}

It is worth noting that Assumption~\eqref{ICAsmmain} does not necessarily imply (strong) monotonicity of the operator $\mathcal{P}+\mathcal{Q}$. 
Specifically, if $\mathcal{P}$ is maximal monotone (so that $\Pi_{\mathcal{P}}=0$) and $\mathcal{Q}$ is strongly monotone (that is, $\Pi_{\mathcal{Q}}>0$), then Assumption~\eqref{ICAsmmain} is fulfilled for all $\mu\in(0,2\Pi_{\mathcal{Q}}/ \xi^{2})$.\\

\begin{lemma} \cite{NHVF,PDINC}\label{tr1}
Let $\mathcal{P}:\mathscr{H}\to2^{\mathscr{H}}$ be a $\Pi_{\mathcal{P}}$-monotone set-valued operator and $\mathcal{Q}:\mathscr{H}\to\mathscr{H}$ be a single-valued mapping. Suppose that Assumption~\eqref{ICAsmmain} holds and define
\[
\Psi(w):=\mathscr{R}_{\mu\mathcal{P}}(w-\mu\mathcal{Q}(w)),
\qquad \mu>0.
\]
Then, $\exists \ \aleph\in(0,1)$, the following holds for every
\(w\in\mathscr{H}\setminus\{\overline{\mathfrak{w}}^{*}\}\):
\begin{enumerate}[label=\textnormal{(\roman*)}]
    \item \label{patr1} The operator $\mathscr{R}_{\mu \mathcal{P}}\circ (\mathcal{I}-\mu \mathcal{Q})$ is Lipschitz continuous with constant $\aleph$.
    
    \item \label{pbtr1} The fixed-point set $\mathrm{Fix}\!\left(\mathscr{R}_{\mu \mathcal{P}}\circ (\mathcal{I}-\mu \mathcal{Q})\right)$ is nonempty and singleton.
    
    \item \label{pctr1} $\|\Psi(w)- {\overline{\mathfrak{w}}}^*\|\leq \aleph\|w-{\overline{\mathfrak{w}}}^*\|, \ \forall \ {\overline{\mathfrak{w}}}^*\in \mbox{\rm Fix}(\mathscr{R}_{\mu \mathcal{P}} (\mathcal{I}-\mu \mathcal{Q}))$.
    
    \item \label{pdtr1} $(1- \aleph)\|w-{\overline{\mathfrak{w}}}^*\|^2\leq \langle w-\Psi(w) ,w-{\overline{\mathfrak{w}}}^* \rangle$.
    
    \item \label{petr1} $(1- \aleph)\|w-{\overline{\mathfrak{w}}}^*\|\leq \|w-\Psi(w)\|\leq (1+\aleph)\|w-{\overline{\mathfrak{w}}}^*\|$.
    
    \item \label{pftr1} $\langle w-\Psi(w) ,w-{\overline{\mathfrak{w}}}^* \rangle \leq (1+\aleph)\|w-{\overline{\mathfrak{w}}}^*\|^2$.\\ 
\end{enumerate}
\end{lemma}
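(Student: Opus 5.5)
The plan is to establish \ref{patr1} first, since every other item follows from it by elementary manipulations. For $w,y\in\mathscr{H}$, Lemma~\ref{GENMON}(ii) says that $\mathscr{R}_{\mu\mathcal{P}}$ is $(1+\mu\Pi_{\mathcal{P}})$-cocoercive, so by Cauchy--Schwarz it is $\frac{1}{1+\mu\Pi_{\mathcal{P}}}$-Lipschitz. Lemma~\ref{GENMON}(iii) together with maximality gives full domain, so $\Psi$ is defined on all of $\mathscr{H}$.

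For the forward step we use the $\Pi_{\mathcal{Q}}$-monotonicity and $\xi$-Lipschitz continuity of $\mathcal{Q}$:
\[
\|(w-\mu\mathcal{Q}w)-(y-\mu\mathcal{Q}y)\|^2\le (1-2\mu\Pi_{\mathcal{Q}}+\mu^2\xi^2)\|w-y\|^2 .
\]
Composing the two bounds gives the Lipschitz constant
\[
\aleph^2=\frac{1-2\mu\Pi_{\mathcal{Q}}+\mu^2\xi^2}{(1+\mu\Pi_{\mathcal{P}})^2}.
\]
Expanding $(1+\mu\Pi_{\mathcal{P}})^2$ shows that $\aleph<1$ holds exactly when $2\mu\Pi_{\mathcal{P}}+\mu^2\Pi_{\mathcal{P}}^2+2\mu\Pi_{\mathcal{Q}}-\mu^2\xi^2>0$. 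After dividing by $\mu$, this is precisely the second condition of Assumption~\ref{ICAsmmain}.

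The main obstacle is the degenerate case where the numerator vanishes, which would give $\aleph=0$ while the statement asks for $\aleph\in(0,1)$. This can be handled by enlarging $\aleph$ to any value in $(0,1)$ above the computed constant, since a Lipschitz bound remains valid with a larger constant.

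Given \ref{patr1}, Banach's contraction principle on the complete space $\mathscr{H}$ yields a unique fixed point, which is \ref{pbtr1}. By Lemma~\ref{lem fixed point}, this fixed point is exactly the solution ${\overline{\mathfrak{w}}}^*$ of \eqref{inc1}. Item \ref{pctr1} is \ref{patr1} applied with $y={\overline{\mathfrak{w}}}^*$, using $\Psi({\overline{\mathfrak{w}}}^*)={\overline{\mathfrak{w}}}^*$.

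For \ref{pdtr1}, write $w-\Psi(w)=(w-{\overline{\mathfrak{w}}}^*)-(\Psi(w)-{\overline{\mathfrak{w}}}^*)$. Then
\[
\langle w-\Psi(w),w-{\overline{\mathfrak{w}}}^*\rangle=\|w-{\overline{\mathfrak{w}}}^*\|^2-\langle\Psi(w)-{\overline{\mathfrak{w}}}^*,w-{\overline{\mathfrak{w}}}^*\rangle .
\]
Bounding the last inner product above by $\aleph\|w-{\overline{\mathfrak{w}}}^*\|^2$ via Cauchy--Schwarz and \ref{pctr1} gives \ref{pdtr1}. Bounding it below by $-\aleph\|w-{\overline{\mathfrak{w}}}^*\|^2$ gives \ref{pftr1}.

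Finally, \ref{petr1} follows from the same decomposition and the triangle inequality: $\|w-\Psi(w)\|$ lies between $\|w-{\overline{\mathfrak{w}}}^*\|-\|\Psi(w)-{\overline{\mathfrak{w}}}^*\|$ and $\|w-{\overline{\mathfrak{w}}}^*\|+\|\Psi(w)-{\overline{\mathfrak{w}}}^*\|$, and we apply \ref{pctr1} to each bound. The restriction $w\neq{\overline{\mathfrak{w}}}^*$ is not needed for any of these inequalities; they hold trivially at $w={\overline{\mathfrak{w}}}^*$.
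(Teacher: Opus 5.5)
Your proof is correct and follows the standard argument behind this result, which the paper cites without proof. You compose the Lipschitz bounds to get the contraction constant $\sqrt{1-2\mu\Pi_{\mathcal{Q}}+\mu^{2}\xi^{2}}/(1+\mu\Pi_{\mathcal{P}})$, which is $<1$ exactly under the second condition of Assumption~\ref{ICAsmmain}. Items (ii)--(vi) then follow from Banach's principle, Cauchy--Schwarz and the triangle inequality, the same step the paper itself uses in Remark~\ref{rem3}. Enlarging the constant in the degenerate zero case, so that $\aleph\in(0,1)$, is a correct detail the statement requires.
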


\begin{theorem}\label{rem2}
Suppose $\mathcal{P}:\mathscr{H}\to2^{\mathscr{H}}$ be a $\Pi_{\mathcal{P}}$-monotone set-valued operator and 
$\mathcal{Q}:\mathscr{H}\to\mathscr{H}$ be a single-valued mapping. 
Assume that $\mu>0$ satisfies $1+\mu\Pi_{\mathcal{P}}>0$ and that Assumption~\eqref{ICAsmmain} hold. 
Then IP \eqref{inc1} admits a unique solution.
\end{theorem}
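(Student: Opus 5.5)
The plan is to reduce existence and uniqueness for the inclusion \eqref{inc1} to a fixed-point statement for the forward--backward operator $\Psi=\mathscr{R}_{\mu\mathcal{P}}\circ(\mathcal{I}-\mu\mathcal{Q})$, and then apply the Banach contraction principle on the finite-dimensional, hence complete, space $\mathscr{H}$.

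\textbf{Step 1: $\Psi$ is well defined and single-valued on all of $\mathscr{H}$.} Since $1+\mu\Pi_{\mathcal{P}}>0$, Lemma~\ref{GENMON}(i) gives that $\mathscr{R}_{\mu\mathcal{P}}$ is single-valued. Assumption~\ref{ICAsmmain} makes $\mathcal{P}$ maximal $\Pi_{\mathcal{P}}$-monotone, so Lemma~\ref{GENMON}(iii) gives $\mathrm{dom}\,\mathscr{R}_{\mu\mathcal{P}}=\mathscr{H}$. The forward step $\mathcal{I}-\mu\mathcal{Q}$ is single-valued and defined everywhere. Hence $\Psi:\mathscr{H}\to\mathscr{H}$ is a genuine single-valued self-map.

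\textbf{Step 2: $\Psi$ is a strict contraction.} By Lemma~\ref{tr1}\ref{patr1}, $\Psi$ is $\aleph$-Lipschitz with $\aleph\in(0,1)$. If one wanted to make this self-contained, the route would be the following.
\begin{itemize}
\item Cocoercivity of the resolvent (Lemma~\ref{GENMON}(ii)) together with Cauchy--Schwarz gives that $\mathscr{R}_{\mu\mathcal{P}}$ is $(1+\mu\Pi_{\mathcal{P}})^{-1}$-Lipschitz.
\item Using $\Pi_{\mathcal{Q}}$-monotonicity and $\xi$-Lipschitz continuity of $\mathcal{Q}$,
\[
\|(w-\mu\mathcal{Q}w)-(y-\mu\mathcal{Q}y)\|^2\le(1-2\mu\Pi_{\mathcal{Q}}+\mu^2\xi^2)\|w-y\|^2 .
\]
\item Combining the two bounds,
\[
\aleph^2=\frac{1-2\mu\Pi_{\mathcal{Q}}+\mu^2\xi^2}{(1+\mu\Pi_{\mathcal{P}})^2}.
\]
\item Finally, $\aleph^2<1$ is equivalent to $\mu\bigl(2(\Pi_{\mathcal{P}}+\Pi_{\mathcal{Q}})+\mu\Pi_{\mathcal{P}}^2-\mu\xi^2\bigr)>0$, which is exactly the second inequality in Assumption~\ref{ICAsmmain}.
\end{itemize}
I expect this verification to be the only real obstacle. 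In particular, one must check that $1-2\mu\Pi_{\mathcal{Q}}+\mu^2\xi^2\ge0$ so that the square root makes sense. This holds because $|\Pi_{\mathcal{Q}}|\le\xi$, and in any case the expression is a squared norm bound.

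\textbf{Step 3: conclude via Banach's theorem.} Since $\Psi$ is a contraction on a complete space, it has exactly one fixed point $\overline{\mathfrak{w}}^*$; this is also Lemma~\ref{tr1}\ref{pbtr1}. By Lemma~\ref{lem fixed point}, which requires only $1+\mu\Pi_{\mathcal{P}}>0$, a point solves \eqref{inc1} if and only if it lies in $\mathrm{Fix}(\Psi)$. Therefore the solution set of \eqref{inc1} equals the singleton $\{\overline{\mathfrak{w}}^*\}$, which gives both existence and uniqueness.

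For uniqueness, one can also argue directly without the contraction. Suppose $w_1,w_2$ are two solutions, so that $-\mathcal{Q}w_i\in\mathcal{P}w_i$. Then $\Pi_{\mathcal{P}}$-monotonicity applied to $-\mathcal{Q}w_1,-\mathcal{Q}w_2$ gives
\[
(\Pi_{\mathcal{P}}+\Pi_{\mathcal{Q}})\|w_1-w_2\|^2\le 0 .
\]
However, this settles uniqueness only in case (i) of the remark. So I would rely on the contraction argument, which covers all parameter regimes permitted by Assumption~\ref{ICAsmmain}.
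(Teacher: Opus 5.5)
Your proposal is correct and follows the paper's own proof. The paper invokes Lemma~\ref{tr1}\ref{pbtr1} for the unique fixed point of $\mathscr{R}_{\mu\mathcal{P}}\circ(\mathcal{I}-\mu\mathcal{Q})$ and Lemma~\ref{lem fixed point} to identify it with the unique solution of \eqref{inc1}. Your explicit contraction constant $\aleph^2=(1-2\mu\Pi_{\mathcal{Q}}+\mu^2\xi^2)/(1+\mu\Pi_{\mathcal{P}})^2$ and the use of maximality for $\mathrm{dom}\,\mathscr{R}_{\mu\mathcal{P}}=\mathscr{H}$ just unpack what the paper takes from the cited lemmas. One small correction: the inequality you actually need is $\Pi_{\mathcal{Q}}\le\xi$, not $|\Pi_{\mathcal{Q}}|\le\xi$; the latter can fail, since $\Pi_{\mathcal{Q}}$ may be any sufficiently negative lower bound.
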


\begin{proof}
From Part~\eqref{pbtr1} of Lemma~\ref{tr1}, the operator 
$\mathscr{R}_{\mu\mathcal{P}}\circ(\mathcal{I}-\mu\mathcal{Q})$
possesses a unique fixed point. 
By Lemma~\ref{lem fixed point}, this fixed point is equivalent to the unique solution of the IP~\eqref{inc1}. 
\end{proof}

\begin{remark}
Observe that Assumption~\eqref{ICAsmmain} is satisfied whenever the parameters 
$\Pi_{\mathcal{P}}, \Pi_{\mathcal{Q}}, L$, and $\mu$ fulfill one of the following conditions:
\begin{enumerate}[label=\textnormal{(\roman*)}]
    \item $\Pi_{\mathcal{P}}+\Pi_{\mathcal{Q}}>0$.
    \item $\Pi_{\mathcal{P}}+\Pi_{\mathcal{Q}}=0$ and $\Pi_{\mathcal{P}}^{2}> \xi^{2}$.
    \item $\Pi_{\mathcal{P}}+\Pi_{\mathcal{Q}}<0$ and $\Pi_{\mathcal{P}}> \xi$.
\end{enumerate}
Consequently, under any of these conditions, the solution of the problem \eqref{inc1} exists and is uniquely determined.
\end{remark}

\subsection{Conditions for Lyapunov Stability}
The proposed TVCNDM is formulated as follows:
\begin{equation}\label{eq:system}
\dot{w} =  \Sigma(s,w(s)), \qquad w(s_0)=w_0 ,
\end{equation}
where $w:[s_0,+\infty)\to \mathscr{H}$, and 
$ \Sigma:[s_0,+\infty)\times\mathscr{H} \to \mathscr{H}$ 
is continuous.\\

\begin{definition}\cite{ICd1,ICd3}\label{EPdef1}
Let ${\overline{\mathfrak{w}}}^*\in\mathscr{H}$ is an equilibrium point of
	\eqref{eq:system} if
	\begin{equation}
		\label{eq:equilibrium}
		 \Sigma(s,{\overline{\mathfrak{w}}}^*) = 0, \quad \forall \, s > s_0.
	\end{equation}. Then  model \eqref{eq:system} is said to be
\begin{enumerate}
	\item \texttt{Lyapunov stability:}  
	For every $\mathcal{E}>0$, $\exists$ $\chi=\chi(\mathcal{E},s_0)$ satisfying
	\begin{equation}
		\label{eq:lyapunov_stable}
		\|w_0-{\overline{\mathfrak{w}}}^*\|<\chi\;\Rightarrow\;
		\|w(s)-{\overline{\mathfrak{w}}}^*\|<\mathcal{E}, \quad \forall\, s>s_0.
	\end{equation}
	\item \texttt{FT convergence}, if  $\exists$ settling-time function $\mathcal{T}:\mathscr{H}\setminus\{\overline{\mathfrak{w}}^{*}\}\to(0,\infty) \ (\text{i.e.}\ \mathcal{T} < \infty)$ satisfying $\forall\, w(0)\in\mathscr{H}\setminus\{\overline{\mathfrak{w}}^{*}\}$
	\[
	w(s)=\overline{\mathfrak{w}}^{*},
	\qquad
	\forall\, s\ge \mathcal{T}(w(0)),
	\]
	where $\mathcal{T}(w(0))<\infty$.
	\item \texttt{\rm{FxT} stable}, if it is FT stable with $\exists$ an upper bound $\mathcal{T}_{\max} < \infty$ satisfying
	\[
	\sup_{w_0 \in \mathscr{H}} \mathcal{T}(w_0) \leq \mathcal{T}_{\max}.
	\]
\end{enumerate}

\end{definition}

\begin{lemma}\cite{TVl1}\label{lem2}
Suppose that $\exists$ a continuously differentiable function
$\mathcal{L}:[s_0,+\infty)\times\mathscr{H} \to \mathbb{R}_+$, which positive definite
satisfying
\begin{equation}
\dot{\mathcal{L}}(s,w(s)) \le
-m_1 \phi_1(s) {\mathcal{L}}^{\vartheta_1}(s,w(s))
-m_2 \phi_2(s) {\mathcal{L}}^{\vartheta_2}(s,w(s)),
\label{eq_lemma_condition}
\end{equation}
where $m_1>0$, $m_2>0$, $0<\vartheta_1<1$, $\vartheta_2>1$ and 
\(
\dot{\mathcal{L}}(s,w) :=
\frac{\partial \mathcal{L}}{\partial s}(s,w)
+
\frac{\partial \mathcal{L}}{\partial w}(s,w)\dot w.
\)
Suppose $\phi_1,\phi_2:[s_0,+\infty)\to \mathbb{R}_{++}$ be continuous.
Define
\[
\mathcal{F}_1(s) = \int_{s_0}^{s} \phi_1(\varkappa)\,d{\varkappa},
\qquad
\mathcal{F}_2(s) = \int_{s_0}^{s} \phi_2(\varkappa)\,d{\varkappa}.
\]
If
\(
\frac{1}{m_2(\vartheta_2-1)}
<
\int_{s_0}^{+\infty} \phi_2(\varkappa)\,d{\varkappa}
\)
and
\(
\mathcal{F}_1\!\circ\!\mathcal{F}_2^{-1}
\!\left(
\frac{1}{m_2(\vartheta_2-1)}
\right)
+
\frac{1}{m_1(1-\vartheta_1)}
<
\int_{s_0}^{+\infty} \phi_1(\varkappa)\,d{\varkappa},
\)
then $\exists$ $\mathcal{T}>s_0$ such that $\mathcal{L}(s,w(s))\equiv0, \ \forall \ s\ge \mathcal{T}$.
Moreover,
\begin{equation}
\mathcal{T} \le
\mathcal{F}_1^{-1}
\left(
\mathcal{F}_1\!\circ\!\mathcal{F}_2^{-1}
\!\left(
\frac{1}{m_2(\vartheta_2-1)}
\right)
+
\frac{1}{m_1(1-\vartheta_1)}
\right).
\label{eq_T_bound_lemma}
\end{equation}
\end{lemma}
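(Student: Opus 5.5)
The natural route is a time reparametrization that turns the time-varying inequality \eqref{eq_lemma_condition} into the autonomous fixed-time inequalities of Polyakov type. Write $\mathcal{L}(s)$ for $\mathcal{L}(s,w(s))$. Since $\phi_1,\phi_2$ are continuous and strictly positive, $\mathcal{F}_1$ and $\mathcal{F}_2$ are $C^1$, strictly increasing and start at $0$ at $s_0$. Hence they are invertible onto their ranges $[0,\int_{s_0}^{+\infty}\phi_i)$. The two integrability hypotheses guarantee that the arguments of $\mathcal{F}_2^{-1}$ and $\mathcal{F}_1^{-1}$ in \eqref{eq_T_bound_lemma} lie in these ranges, so the bound is well defined. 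The argument has two phases: a first phase driven by the $\vartheta_2$-term, which brings $\mathcal{L}$ below $1$ in bounded time, and a second phase driven by the $\vartheta_1$-term, which drives $\mathcal{L}$ to zero.

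\emph{Phase 1.} Drop the first (nonpositive) term to get $\dot{\mathcal{L}}\le -m_2\phi_2(s)\mathcal{L}^{\vartheta_2}$. On any interval where $\mathcal{L}>0$, compute
\[
\frac{d}{ds}\mathcal{L}^{1-\vartheta_2}=(1-\vartheta_2)\mathcal{L}^{-\vartheta_2}\dot{\mathcal{L}}\ge m_2(\vartheta_2-1)\phi_2(s).
\]
Integrating from $s_0$ gives $\mathcal{L}^{1-\vartheta_2}(s)\ge \mathcal{L}^{1-\vartheta_2}(s_0)+m_2(\vartheta_2-1)\mathcal{F}_2(s)\ge m_2(\vartheta_2-1)\mathcal{F}_2(s)$, and the resulting bound does not depend on $\mathcal{L}(s_0)$. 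Set $s_1:=\mathcal{F}_2^{-1}\big(1/(m_2(\vartheta_2-1))\big)$. Then $\mathcal{L}(s_1)\le 1$, and this also holds if $\mathcal{L}$ already vanished earlier. Once $\mathcal{L}$ hits zero it stays zero, because $\dot{\mathcal{L}}\le 0$ and $\mathcal{L}\ge0$.

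\emph{Phase 2.} Drop the second term to get $\dot{\mathcal{L}}\le -m_1\phi_1(s)\mathcal{L}^{\vartheta_1}$. While $\mathcal{L}>0$, we have $\frac{d}{ds}\mathcal{L}^{1-\vartheta_1}\le -m_1(1-\vartheta_1)\phi_1(s)$. Integrating from $s_1$ gives
\[
\mathcal{L}^{1-\vartheta_1}(s)\le 1-m_1(1-\vartheta_1)\big(\mathcal{F}_1(s)-\mathcal{F}_1(s_1)\big).
\]
The right-hand side reaches zero at the $s$ with $\mathcal{F}_1(s)=\mathcal{F}_1(s_1)+1/(m_1(1-\vartheta_1))$. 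That value lies in the range of $\mathcal{F}_1$ by the second hypothesis, so $\mathcal{L}$ must vanish no later than
\[
\mathcal{T}=\mathcal{F}_1^{-1}\Big(\mathcal{F}_1\circ\mathcal{F}_2^{-1}\big(1/(m_2(\vartheta_2-1))\big)+1/(m_1(1-\vartheta_1))\Big).
\]
Monotonicity then keeps $\mathcal{L}\equiv0$ for all $s\ge\mathcal{T}$. This yields \eqref{eq_T_bound_lemma}, uniformly in $w_0$.

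The main obstacle is rigor at the points where $\mathcal{L}=0$: there $\mathcal{L}^{1-\vartheta_2}$ blows up and $\mathcal{L}^{1-\vartheta_1}$ fails to be differentiable. I would handle this by working on maximal open intervals where $\mathcal{L}>0$. First I would show that the zero set is forward invariant, using that $\mathcal{L}$ is nonincreasing and nonnegative. Then I would argue by contradiction: if $\mathcal{L}(s)>0$ on all of $[s_1,\mathcal{T}]$, the comparison estimate would make $\mathcal{L}^{1-\vartheta_1}$ nonpositive at $\mathcal{T}$, which is impossible. I would also check that the trajectory exists on $[s_0,\mathcal{T}]$, which I take as implicit in the hypothesis that the inequality holds along $w(s)$.
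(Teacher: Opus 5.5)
The paper gives no proof of this lemma (it is quoted from \cite{TVl1}), so there is no in-paper argument to compare with. Judged on its own, your two-phase comparison argument is correct and complete.

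In Phase 1 you integrate the Bernoulli-type inequality for $\mathcal{L}^{1-\vartheta_2}$ and discard the positive term $\mathcal{L}^{1-\vartheta_2}(s_0)$. This is exactly where the independence from $w_0$ comes from, and it gives $\mathcal{L}(s_1)\le 1$ at $s_1=\mathcal{F}_2^{-1}\bigl(1/(m_2(\vartheta_2-1))\bigr)$. In Phase 2 you integrate $\mathcal{L}^{1-\vartheta_1}$ from $s_1$. The contradiction argument, combined with monotonicity of $\mathcal{L}$ (the right-hand side of \eqref{eq_lemma_condition} is nonpositive), yields precisely \eqref{eq_T_bound_lemma}. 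Your treatment of the points where $\mathcal{L}=0$ is also sound: you work only on intervals where $\mathcal{L}>0$ and use forward invariance of the zero set.

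Two small remarks.

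First, your opening sentence promises a time reparametrization, but what you actually do is direct integration. The two integrations are the reparametrizations $\tau=\mathcal{F}_2(s)$ and $\tau=\mathcal{F}_1(s)$ in disguise. Two different clocks are needed when $\phi_1\neq\phi_2$, and that is why a single change of time does not in general reduce the lemma to Polyakov's autonomous result. The two-phase structure is therefore essential, not cosmetic.

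Second, your argument makes clear which hypotheses are actually used:
\begin{itemize}
\item $\mathcal{L}\ge 0$ along the trajectory;
\item $C^1$ regularity of $s\mapsto\mathcal{L}(s,w(s))$;
\item strict positivity of $\phi_1,\phi_2$, which is what makes $\mathcal{F}_i$ strictly increasing and $\mathcal{F}_i^{-1}$ well defined.
\end{itemize}
Positive definiteness enters only afterwards, to pass from $\mathcal{L}\equiv 0$ to $w(s)=\overline{\mathfrak{w}}^*$. The positivity requirement is worth keeping in mind for the paper's robustness theorem. There the lemma is invoked with effective gains of the form $c\,\phi_i(s)-\delta$, so your proof applies only if these modified gains are strictly positive.
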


\section{A Nominal Time-varying Neurodynamic Model for IPs}\label{ICFxTsec3}
To solve IP \eqref{inc1}, we take nominal TVCNDM as follows:
\begin{equation}\label{hdl2}
\dot{w}(s)
=
-\sigma(s)\Big(w(s) - \mathscr{R}_{\mu \mathcal{P}}\big(w(s) - \mu \mathcal{Q}(w(s))\big)\Big),
\end{equation}
where $\sigma(s)>0$ be a continuous time-dependent $\forall s>0$. If we  define
\(
\Psi(w):=\mathscr{R}_{\mu \mathcal{P}}(w-\mu \mathcal{Q}(w)).
\)
Then model \eqref{hdl2} can be rewritten as
\begin{equation}\label{1stNDS}
\dot{w}=-\sigma(s)\big(w-\Psi(w)\big).
\end{equation}

The next result reveals a fundamental relationship between the solution of \eqref{inc1} and the equilibrium points of \eqref{1stNDS}.\\

\begin{proposition}\label{prop3.1}
Suppose $\mathcal{P} : \mathscr{H} \to 2^{\mathscr{H}}$ be $\Pi_{\mathcal{P}}$-monotone and $\mathcal{Q} : \mathscr{H} \to \mathscr{H}$ be single-valued. If $\mu>0$ satisfies $1+\mu\Pi_{\mathcal{P}}>0$, then ${\overline{\mathfrak{w}}}^* \in \mathscr{H}$ is a solution of \eqref{inc1} $\iff$ it is an equilibrium point of \eqref{1stNDS}.
\end{proposition}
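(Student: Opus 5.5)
The plan is to reduce the statement to Lemma~\ref{lem fixed point} using only the fact that $\sigma(s)>0$ for all $s$. Under the hypothesis $1+\mu\Pi_{\mathcal{P}}>0$, Lemma~\ref{GENMON}(i) gives that $\mathscr{R}_{\mu\mathcal{P}}$ is single-valued. Hence $\Psi(w)=\mathscr{R}_{\mu\mathcal{P}}(w-\mu\mathcal{Q}(w))$ is a well-defined single-valued map wherever it is defined, and the right-hand side of \eqref{1stNDS}, namely $\Sigma(s,w)=-\sigma(s)\big(w-\Psi(w)\big)$, makes sense.

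For the direction ``solution $\Rightarrow$ equilibrium'', let ${\overline{\mathfrak{w}}}^*$ solve \eqref{inc1}. Lemma~\ref{lem fixed point} gives $\Psi({\overline{\mathfrak{w}}}^*)={\overline{\mathfrak{w}}}^*$, so
\[
\Sigma(s,{\overline{\mathfrak{w}}}^*)=-\sigma(s)\big({\overline{\mathfrak{w}}}^*-\Psi({\overline{\mathfrak{w}}}^*)\big)=0 \quad \text{for every } s>s_0 .
\]
By Definition~\ref{EPdef1}, ${\overline{\mathfrak{w}}}^*$ is therefore an equilibrium point.

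For the converse, suppose $\Sigma(s,{\overline{\mathfrak{w}}}^*)=0$ for all $s>s_0$. Since $\sigma(s)>0$, we may divide by $\sigma(s)$ at any single such $s$ to get ${\overline{\mathfrak{w}}}^*=\Psi({\overline{\mathfrak{w}}}^*)$. The reverse implication of Lemma~\ref{lem fixed point} then shows that ${\overline{\mathfrak{w}}}^*$ solves \eqref{inc1}.

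The only delicate point is that $\Psi({\overline{\mathfrak{w}}}^*)$ must actually be defined, i.e., that the point ${\overline{\mathfrak{w}}}^*-\mu\mathcal{Q}({\overline{\mathfrak{w}}}^*)$ lies in the domain of the resolvent. Without maximality, Lemma~\ref{GENMON}(iii) does not guarantee full domain. In the forward direction this is automatic: $0\in\mathcal{P}({\overline{\mathfrak{w}}}^*)+\mathcal{Q}({\overline{\mathfrak{w}}}^*)$ means ${\overline{\mathfrak{w}}}^*-\mu\mathcal{Q}({\overline{\mathfrak{w}}}^*)\in(\mathcal{I}+\mu\mathcal{P})({\overline{\mathfrak{w}}}^*)$. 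In the converse direction, being an equilibrium presupposes that $\Sigma(s,{\overline{\mathfrak{w}}}^*)$ is defined. In any case, under Assumption~\ref{ICAsmmain} the operator $\mathcal{P}$ is maximal, so $\mathscr{R}_{\mu\mathcal{P}}$ is defined on all of $\mathscr{H}$. I will note this explicitly in a sentence rather than treat it as a genuine obstacle.
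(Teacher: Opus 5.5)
Your proposal is correct and follows essentially the same route as the paper, which simply invokes Lemma~\ref{lem fixed point} (solutions of \eqref{inc1} are exactly the fixed points of $\Psi$) together with the definition of an equilibrium of \eqref{1stNDS}. Dividing by $\sigma(s)>0$ and handling the resolvent's domain (automatic in the forward direction, presupposed in the converse) make explicit what the paper's one-line proof leaves implicit; the appeal to maximality via Assumption~\ref{ICAsmmain} is unnecessary, since the proposition does not assume it and your argument does not need it.
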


\begin{proof}
The assertion is a direct consequence of Lemma~\ref{lem fixed point}, where the solutions of \eqref{inc1} are characterized as the fixed points of $\Psi$, together with the definition of equilibrium of \eqref{1stNDS}.
\end{proof}

\subsection{TVCNDM for IPs}
The FxT TVCNDM, we formulate as:

	\begin{equation}\label{INFxTM}
		\dot{w}=-\theta(w,s)(w-\Psi(w)),
	\end{equation} 
	where 
	\begin{equation}\label{dnrho} 
		\theta(w,s):=\begin{cases} 
			\dfrac{ \phi_1(s)}{\|w-\Psi(w)\|^{1-\lambda_1}}& +\dfrac{ \phi_1(s)}{\|w-\Psi(w)\|^{1-\lambda_2}},\mbox{ if } w\in   \mathscr{H}\setminus {\rm Fix}(\Psi),\\
			0 & \mbox{otherwise}, 
		\end{cases}
	\end{equation} 
with $0<\lambda_1<1$, $\lambda_2>1$.
$\phi_1(s),\phi_2(s):[s_0,+\infty)\rightarrow\mathbb{R}_{++}$ are continuous
functions. Also, TVCNDM \eqref{INFxTM} can be written in the following form:
\begin{equation}
\dot{w} = -\phi_1(s)  \Sigma(w) \left[ \|w-\Psi(w)\|^{\lambda_1} \left( 1 + \frac{\phi_2(s)}{\phi_1(s)} \|w-\Psi(w)\|^{\lambda_2- \lambda_1} \right) \right], 
\end{equation}
where 
\[
 \Sigma(w) := \frac{w -\Psi(w)}{\|w -\Psi(w)\|}.
\]

\begin{remark}
\begin{enumerate}[label=\textnormal{(\roman*)}]
\item As we take  $\phi_2(s) = 1$, TVCNDM \eqref{INFxTM} is become
\[
\dot{w} = -\phi_1(s) \Sigma(w) \left[ \|w-\Psi(w)\|^{\lambda_1} \left( 1 + \frac{1}{\phi_1(s)} \|w-\Psi(w)\|^{\lambda_2- \lambda_1} \right) \right].
\]

\item If $b_2=0$ and $\phi_1(s)$ is a constant function, then the model \eqref{INFxTM} reduces to the nominal model studied in \cite{NH}, namely
\begin{equation}\label{reduemodel}
\dot{w}(s)
=
-\sigma\big(w(s)-\Psi(w)\big),
\end{equation}
where $\sigma>0$. 
Then model \eqref{reduemodel} was analyzed in \cite{NH} in the context of finite-time stability for generalized monotone IPs.
\end{enumerate}
\end{remark}

\begin{proposition}\label{prop3.5}
The models \eqref{1stNDS} and \eqref{INFxTM} share the same equilibrium points.
\end{proposition}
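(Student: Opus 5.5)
The plan is to show that both models have exactly the fixed-point set $\mathrm{Fix}(\Psi)$ as their equilibria, using Definition~\ref{EPdef1}: a point $\overline{\mathfrak{w}}^*$ is an equilibrium precisely when the right-hand side vanishes at $\overline{\mathfrak{w}}^*$ for all $s>s_0$.

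\textbf{Model \eqref{1stNDS}.} Since $\sigma(s)>0$ for every $s$, the condition $-\sigma(s)(\overline{\mathfrak{w}}^*-\Psi(\overline{\mathfrak{w}}^*))=0$ holds for all $s$ if and only if $\overline{\mathfrak{w}}^*=\Psi(\overline{\mathfrak{w}}^*)$. So the equilibria of \eqref{1stNDS} are exactly $\mathrm{Fix}(\Psi)$. This is consistent with Proposition~\ref{prop3.1} together with Lemma~\ref{lem fixed point}.

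\textbf{Model \eqref{INFxTM}, first inclusion.} Take $\overline{\mathfrak{w}}^*\in\mathrm{Fix}(\Psi)$. By \eqref{dnrho}, $\theta(\overline{\mathfrak{w}}^*,s)=0$, and in addition $\overline{\mathfrak{w}}^*-\Psi(\overline{\mathfrak{w}}^*)=0$. Hence the right-hand side of \eqref{INFxTM} is zero for all $s$, and $\overline{\mathfrak{w}}^*$ is an equilibrium.

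\textbf{Model \eqref{INFxTM}, reverse inclusion.} Suppose $w\notin\mathrm{Fix}(\Psi)$, and set $r=\|w-\Psi(w)\|>0$. Then
\[
\theta(w,s)=\phi_1(s)r^{\lambda_1-1}+\phi_2(s)r^{\lambda_2-1}>0,
\]
because $\phi_1,\phi_2$ take values in $\mathbb{R}_{++}$. Here I read the second coefficient in \eqref{dnrho} as $\phi_2$, which is clearly intended; the conclusion is the same with $\phi_1$. It follows that
\[
\|\theta(w,s)(w-\Psi(w))\|=\phi_1(s)r^{\lambda_1}+\phi_2(s)r^{\lambda_2}>0,
\]
so $w$ is not an equilibrium. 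Therefore the equilibria of \eqref{INFxTM} are also exactly $\mathrm{Fix}(\Psi)$.

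Combining the three steps, both models have the same equilibrium set, namely $\mathrm{Fix}(\Psi)$. Under Assumption~\ref{ICAsmmain}, Lemma~\ref{tr1}\ref{pbtr1} further shows this set is the singleton $\{\overline{\mathfrak{w}}^*\}$.

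The only point needing care, and hence the main obstacle, is that \eqref{INFxTM} is defined piecewise, with negative powers of $\|w-\Psi(w)\|$. The definition $\theta=0$ on $\mathrm{Fix}(\Psi)$ is exactly what makes the right-hand side well-defined there. I would also note that $\|\theta(w,s)(w-\Psi(w))\|\to 0$ as $w\to\mathrm{Fix}(\Psi)$, since $\lambda_1>0$, so the equilibrium is consistent with continuity of the vector field. Beyond that, the argument is purely algebraic.
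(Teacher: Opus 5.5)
Your proof is correct and follows the same route as the paper, which simply observes from \eqref{dnrho} that $\theta(w,s)=0$ exactly when $w\in\mathrm{Fix}(\Psi)$, and that this is the equilibrium condition of \eqref{1stNDS}. You make explicit what the paper leaves implicit: the full right-hand side $\theta(w,s)(w-\Psi(w))$ vanishes precisely on $\mathrm{Fix}(\Psi)$ because $\phi_1,\phi_2>0$, and $\sigma(s)>0$ gives the same set for \eqref{1stNDS}.
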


\begin{proof}
The claim follows immediately from \eqref{dnrho}, by the definition of $\theta(\cdot,s)$.
Indeed,
\[
\theta(w,s)=0
\quad \Longleftrightarrow \quad
w \in \mathrm{Fix}\!\left(\mathscr{R}_{\mu \mathcal{P}} \circ (\mathcal{I}-\mu \mathcal{Q})\right),
\]
which matches the equilibrium condition of model \eqref{1stNDS}.
\end{proof}

\begin{proposition}\label{prop3.6}
Suppose $\mathcal{P} : \mathscr{H} \to 2^{\mathscr{H}}$, $\mathcal{Q} : \mathscr{H} \to \mathscr{H}$, and $\mu>0$ satisfy Assumption~\eqref{ICAsmmain}. Then ${\overline{\mathfrak{w}}}^*$ solves the IP \eqref{inc1} $\iff$ it is an equilibrium point of \eqref{INFxTM}.
\end{proposition}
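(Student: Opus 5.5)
The proof will chain Proposition~\ref{prop3.5}, Proposition~\ref{prop3.1} and Lemma~\ref{lem fixed point}. Assumption~\eqref{ICAsmmain} contains the condition $1+\mu\Pi_{\mathcal{P}}>0$, so Lemma~\ref{GENMON}(i) makes $\mathscr{R}_{\mu\mathcal{P}}$ single-valued. Since $\mathcal{P}$ is maximal $\Pi_{\mathcal{P}}$-monotone, Lemma~\ref{GENMON}(iii) gives $\mathrm{dom}\,\mathscr{R}_{\mu\mathcal{P}}=\mathscr{H}$. Hence $\Psi=\mathscr{R}_{\mu\mathcal{P}}\circ(\mathcal{I}-\mu\mathcal{Q})$ is a well-defined single-valued map on all of $\mathscr{H}$. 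This guarantees that the right-hand sides of \eqref{1stNDS} and \eqref{INFxTM} make sense at every point, so the notion of equilibrium in Definition~\ref{EPdef1} applies.

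For the forward direction, let $\overline{\mathfrak{w}}^*$ solve \eqref{inc1}. Lemma~\ref{lem fixed point} gives $\overline{\mathfrak{w}}^*\in\mathrm{Fix}(\Psi)$. By definition \eqref{dnrho}, $\theta(\overline{\mathfrak{w}}^*,s)=0$ for all $s$, so the right-hand side of \eqref{INFxTM} vanishes identically in $s$; we do not even need $\overline{\mathfrak{w}}^*-\Psi(\overline{\mathfrak{w}}^*)=0$ separately, though it also holds. Thus $\overline{\mathfrak{w}}^*$ is an equilibrium of \eqref{INFxTM}.

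For the converse, suppose $\theta(\overline{\mathfrak{w}}^*,s)(\overline{\mathfrak{w}}^*-\Psi(\overline{\mathfrak{w}}^*))=0$ for all $s>s_0$. Assume for contradiction that $\overline{\mathfrak{w}}^*\notin\mathrm{Fix}(\Psi)$, and set $r=\|\overline{\mathfrak{w}}^*-\Psi(\overline{\mathfrak{w}}^*)\|>0$. Then
\[
\theta(\overline{\mathfrak{w}}^*,s)=\phi_1(s)r^{\lambda_1-1}+\phi_2(s)r^{\lambda_2-1}>0,
\]
because $\phi_1,\phi_2$ take values in $\mathbb{R}_{++}$. (This reads the second coefficient in \eqref{dnrho} as $\phi_2$; positivity holds either way.) A positive scalar times a nonzero vector is nonzero, a contradiction. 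So $\overline{\mathfrak{w}}^*\in\mathrm{Fix}(\Psi)$, and Lemma~\ref{lem fixed point} shows that $\overline{\mathfrak{w}}^*$ solves \eqref{inc1}. Equivalently, this direction is Proposition~\ref{prop3.5} combined with Proposition~\ref{prop3.1}.

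The only delicate point is the well-definedness of $\Psi$ on all of $\mathscr{H}$, which is where the maximality assumption enters. Otherwise $\theta$ and the equilibrium notion could be meaningless at some points. Lemma~\ref{GENMON}(iii) disposes of this. Optionally, one may add via Theorem~\ref{rem2} that the equilibrium is in fact unique.
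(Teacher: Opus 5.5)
Your proposal is correct and is essentially the paper's argument. The paper simply combines Proposition~\ref{prop3.5} (the equilibria of \eqref{INFxTM} coincide with those of \eqref{1stNDS}, namely $\mathrm{Fix}(\Psi)$) with Proposition~\ref{prop3.1} (which rests on Lemma~\ref{lem fixed point}); you unpack these inline via the positivity of $\theta$ off $\mathrm{Fix}(\Psi)$, and your extra check via Lemma~\ref{GENMON}(i),(iii) that $\Psi$ is single-valued and defined on all of $\mathscr{H}$ is a harmless clarification the paper leaves implicit.
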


\begin{proof}
Combining Propositions~\ref{prop3.5} and~\ref{prop3.1} yields the desired conclusion.
\end{proof}

\subsection{FxT Convergence}
The well-posedness of the proposed TVCNDM is established in the following discussion.\\

\begin{proposition}\cite{Zheng}\label{unique sol da sy}
Consider the model \eqref{INFxTM}. Suppose that the mapping $\Psi: \mathscr{H}\to \mathscr{H}$ is locally Lipschitz continuous and satisfies
\[
\Psi({\overline{\mathfrak{w}}}^*)={\overline{\mathfrak{w}}}^*
\quad\text{and}\quad
\langle w-{\overline{\mathfrak{w}}}^*,\, w-\Psi(w)\rangle>0,
\quad
\forall w\in\mathscr{H}\setminus\{{\overline{\mathfrak{w}}}^*\},
\]
for some ${\overline{\mathfrak{w}}}^*\in \mathscr{H}$. Then the vector field of \eqref{INFxTM} is continuous on $\mathscr{H}$, and for any initial condition the differential equation admits a unique global solution defined for all $s\ge0$.\\
\end{proposition}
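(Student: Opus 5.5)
The plan has two parts. First I will show that the right-hand side $F(s,w):=-\theta(w,s)(w-\Psi(w))$ of \eqref{INFxTM} is continuous on $[s_0,+\infty)\times\mathscr{H}$. Then I will obtain existence by Peano, uniqueness via a monotonicity-type argument near the equilibrium, and global extension from a Lyapunov bound.

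For continuity, write $e(w):=w-\Psi(w)$. On $\mathscr{H}\setminus\{\overline{\mathfrak{w}}^*\}$ the hypothesis $\langle w-\overline{\mathfrak{w}}^*,e(w)\rangle>0$ gives $e(w)\neq 0$. So $\mathrm{Fix}(\Psi)=\{\overline{\mathfrak{w}}^*\}$, and on this set
\[
F(s,w)=-\phi_1(s)\|e(w)\|^{\lambda_1-1}e(w)-\phi_2(s)\|e(w)\|^{\lambda_2-1}e(w).
\]
This is a composition of continuous maps, since $\Psi$ is locally Lipschitz and $\phi_1,\phi_2$ are continuous. At $\overline{\mathfrak{w}}^*$ we have $\|F(s,w)\|\le\phi_1(s)\|e(w)\|^{\lambda_1}+\phi_2(s)\|e(w)\|^{\lambda_2}$. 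Because $e$ is continuous with $e(\overline{\mathfrak{w}}^*)=0$ and $\lambda_1,\lambda_2>0$, this bound tends to $0=F(s,\overline{\mathfrak{w}}^*)$, locally uniformly in $s$. Peano's theorem in the finite-dimensional space $\mathscr{H}$ then gives a local solution for every initial condition.

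For uniqueness, I will separate two regions. Away from $\overline{\mathfrak{w}}^*$, $e$ is locally Lipschitz and bounded away from zero. The map $v\mapsto\|v\|^{\lambda-1}v$ is locally Lipschitz on $v\neq0$, so $F$ is locally Lipschitz in $w$ there, and Picard--Lindelöf gives uniqueness as long as a trajectory avoids $\overline{\mathfrak{w}}^*$.

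The \textbf{main obstacle} is the equilibrium itself, where the term $\|e\|^{\lambda_1-1}e$ with $\lambda_1<1$ is not Lipschitz. I will show that no solution can leave $\overline{\mathfrak{w}}^*$ once it reaches it, and that a trajectory cannot branch there. Take $V(w)=\tfrac12\|w-\overline{\mathfrak{w}}^*\|^2$. Along any solution,
\[
\dot V=-\theta(w,s)\langle w-\overline{\mathfrak{w}}^*,e(w)\rangle\le 0,
\]
by the sign hypothesis. Hence if $w(s_1)=\overline{\mathfrak{w}}^*$, then $V\equiv0$ for $s\ge s_1$. Any solution through $\overline{\mathfrak{w}}^*$ at time $s_1$ therefore stays there forward in time. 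Every solution is thus either the constant equilibrium from some time on, or it lives in the region where uniqueness holds. Two solutions with the same initial datum agree until the first time either reaches $\overline{\mathfrak{w}}^*$. Since they coincide up to that time, both reach $\overline{\mathfrak{w}}^*$ at that time, and both stay there afterwards, so they agree for all later times. The subtle point is that forward uniqueness only needs this one-sided monotonicity and not a Lipschitz bound. Backward uniqueness is not claimed.

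For global existence, $\dot V\le0$ shows that every trajectory remains in the closed ball of radius $\|w_0-\overline{\mathfrak{w}}^*\|$ about $\overline{\mathfrak{w}}^*$. On this compact set $F$ is bounded on bounded time intervals, so solutions cannot blow up in finite time. The standard continuation theorem then extends the solution to all $s\ge s_0$ (equivalently $s\ge0$ as stated).

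Finally, the hypotheses are available in the setting of the paper: Lemma~\ref{tr1}\ref{patr1} gives global Lipschitz continuity of $\Psi$, and Lemma~\ref{tr1}\ref{pdtr1} gives $\langle w-\overline{\mathfrak{w}}^*,e(w)\rangle\ge(1-\aleph)\|w-\overline{\mathfrak{w}}^*\|^2>0$. So Proposition~\ref{unique sol da sy} applies to \eqref{INFxTM} under Assumption~\ref{ICAsmmain}.
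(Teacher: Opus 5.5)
Your proof is correct. The paper does not prove this statement; it imports it from \cite{Zheng}. Your argument is the standard self-contained proof of such results: continuity at $\overline{\mathfrak{w}}^*$ via the bound $\phi_1(s)\|e(w)\|^{\lambda_1}+\phi_2(s)\|e(w)\|^{\lambda_2}$, then Peano, then local-Lipschitz uniqueness on $\mathscr{H}\setminus\{\overline{\mathfrak{w}}^*\}$ combined with forward invariance of $\overline{\mathfrak{w}}^*$ from $\dot V\le 0$, then continuation from bounded trajectories. It also makes explicit two points the citation leaves implicit: only forward uniqueness holds, and the time-varying gains cause no difficulty because $s$ enters only through the continuous scalar factors $\phi_1(s),\phi_2(s)$.
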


\begin{remark}\label{rem3}
Define
\(
\Psi(w):=\mathscr{R}_{\mu \mathcal{P}}\big(w-\mu \mathcal{Q}(w)\big),
\ w\in\mathscr{H}.
\)
By Lemma~\ref{tr1}, the operator $\Psi$ possesses a unique fixed point ${\overline{\mathfrak{w}}}^*$, and hence the model admits a unique equilibrium. Moreover, for any $w\neq {\overline{\mathfrak{w}}}^*$,
\begin{equation}\label{eq13}
\begin{aligned}
\langle w-{\overline{\mathfrak{w}}}^*, w-\Psi(w)\rangle
&=
\|w-{\overline{\mathfrak{w}}}^*\|^2
+
\left\langle w-{\overline{\mathfrak{w}}}^*,\,
{\overline{\mathfrak{w}}}^*-\mathscr{R}_{\mu \mathcal{P}}(w-\mu \mathcal{Q}(w))
\right\rangle .
\end{aligned}
\end{equation}
Applying the Cauchy--Schwarz inequality together with Lemma~\ref{tr1} yields
\begin{equation}\label{eq14}
\left\langle w-{\overline{\mathfrak{w}}}^*,\, 
{\overline{\mathfrak{w}}}^*-\mathscr{R}_{\mu \mathcal{P}}(w-\mu \mathcal{Q}(w))
\right\rangle
\ge
-\,\aleph\|w-{\overline{\mathfrak{w}}}^*\|^2,
\end{equation}
for some $\aleph\in(0,1)$. Combining \eqref{eq13} and \eqref{eq14}, we obtain
\[
\langle w-{\overline{\mathfrak{w}}}^*,\, w-\Psi(w)\rangle
\ge
(1-\aleph)\|w-{\overline{\mathfrak{w}}}^*\|^2
>
0,
\qquad
\forall w\in\mathscr{H}\setminus\{{\overline{\mathfrak{w}}}^*\}.
\]
\end{remark}

We now turn to the next principal result of the section. Let ${\overline{\mathfrak{w}}}^*\in\mathscr{H}$ be the unique equilibrium point of \eqref{INFxTM}  tp solve IP~\eqref{inc1}. Then it is FxT stable.

\begin{theorem}\label{tr2}
Consider an $\Pi_{\mathcal{P}}$-monotone set-valued mapping $\mathcal{P}: \mathscr{H} \to 2^\mathscr{H}$ and a single-valued operator $\mathcal{Q}: \mathscr{H} \to \mathscr{H} $, along with a positive parameter $\mu$ s. t.  assumption \eqref{ICAsmmain} is satisfied. Furthermore, Suppose that
\(
\frac{1}{m_2(\vartheta_2-1)}
<
\int_{s_0}^{+\infty} \phi_2(\varkappa)\,d{\varkappa}
\)
and
\(
{\mathcal{F}}_1\!\circ\!{\mathcal{F}}_2^{-1}
\left(
\frac{1}{m_2(\vartheta_2-1)}
\right)
+
\frac{1}{m_1(1-\vartheta_1)}
<
\int_{s_0}^{+\infty} \phi_1(\varkappa)\,d{\varkappa},
\)
where
\(
\vartheta_1=\frac{1+\lambda_1}{2},\
\vartheta_2=\frac{1+\lambda_2}{2},
\
m_1=
2^{\vartheta_1}
\frac{(1- \aleph)}{\mathcal (1+\aleph)^{1-\lambda_1}},
\) and \(
m_2=
2^{\vartheta_2}(1- \aleph)^{\lambda_2}
\)  for some $
\aleph\in(0,1)$ with
\[
{\mathcal{F}}_1(s)=\int_0^s \phi_1(\varkappa)\,d{\varkappa},
\qquad
{\mathcal{F}}_2(s)=\int_0^s \phi_2(\varkappa)\,d{\varkappa},
\]
then $w(s)$ of \eqref{INFxTM} converges to ${\overline{\mathfrak{w}}}^*$ with settling time in \eqref{eq_T_bound_lemma}.
\end{theorem}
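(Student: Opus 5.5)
We apply Lemma~\ref{lem2} to the Lyapunov function
\[
\mathcal{L}(w)=\tfrac12\|w-\overline{\mathfrak{w}}^*\|^2 ,
\]
where $\overline{\mathfrak{w}}^*$ is the unique fixed point of $\Psi$; it exists by Lemma~\ref{tr1}\ref{pbtr1}, and by Proposition~\ref{prop3.6} it is the unique solution of \eqref{inc1} and the unique equilibrium of \eqref{INFxTM}. The proof has three steps: well-posedness of the trajectory, a differential inequality of the form \eqref{eq_lemma_condition} with the stated $m_i,\vartheta_i$, and an appeal to Lemma~\ref{lem2} for the settling-time bound.

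\emph{Well-posedness.} By Lemma~\ref{tr1}\ref{patr1}, $\Psi$ is $\aleph$-Lipschitz, hence locally Lipschitz. Remark~\ref{rem3} gives $\langle w-\overline{\mathfrak{w}}^*,w-\Psi(w)\rangle>0$ for every $w\ne\overline{\mathfrak{w}}^*$. Proposition~\ref{unique sol da sy} therefore applies and yields a unique global trajectory $w(s)$. Continuity of the right-hand side at the equilibrium follows because $\|\theta(w,s)(w-\Psi(w))\|=\phi_1\|w-\Psi(w)\|^{\lambda_1}+\phi_2\|w-\Psi(w)\|^{\lambda_2}\to0$ as $w\to\overline{\mathfrak{w}}^*$. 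Here I read the second $\phi_1$ in \eqref{dnrho} as the intended $\phi_2$.

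\emph{Differential inequality.} As long as $w(s)\ne\overline{\mathfrak{w}}^*$,
\[
\dot{\mathcal{L}}=-\Big(\phi_1(s)\|w-\Psi(w)\|^{\lambda_1-1}+\phi_2(s)\|w-\Psi(w)\|^{\lambda_2-1}\Big)\langle w-\overline{\mathfrak{w}}^*,w-\Psi(w)\rangle .
\]
Lemma~\ref{tr1}\ref{pdtr1} bounds the inner product below by $(1-\aleph)\|w-\overline{\mathfrak{w}}^*\|^2$. The two powers of $\|w-\Psi(w)\|$ are then bounded using Lemma~\ref{tr1}\ref{petr1}:
\begin{itemize}
\item Since $\lambda_1-1<0$, use the upper bound: $\|w-\Psi(w)\|^{\lambda_1-1}\ge(1+\aleph)^{\lambda_1-1}\|w-\overline{\mathfrak{w}}^*\|^{\lambda_1-1}$.
\item Since $\lambda_2-1>0$, use the lower bound: $\|w-\Psi(w)\|^{\lambda_2-1}\ge(1-\aleph)^{\lambda_2-1}\|w-\overline{\mathfrak{w}}^*\|^{\lambda_2-1}$.
\end{itemize}
This gives
\[
\dot{\mathcal{L}}\le-\phi_1\tfrac{1-\aleph}{(1+\aleph)^{1-\lambda_1}}\|w-\overline{\mathfrak{w}}^*\|^{1+\lambda_1}-\phi_2(1-\aleph)^{\lambda_2}\|w-\overline{\mathfrak{w}}^*\|^{1+\lambda_2}.
\]
Writing $\|w-\overline{\mathfrak{w}}^*\|^{1+\lambda_i}=(2\mathcal{L})^{(1+\lambda_i)/2}=2^{\vartheta_i}\mathcal{L}^{\vartheta_i}$ produces exactly \eqref{eq_lemma_condition} with the stated $m_1,m_2,\vartheta_1,\vartheta_2$. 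The exponents satisfy $\vartheta_1\in(1/2,1)$ and $\vartheta_2>1$ as Lemma~\ref{lem2} requires. Once $w$ reaches $\overline{\mathfrak{w}}^*$ it remains there by uniqueness, so the inequality holds trivially afterwards.

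\emph{Settling time.} The integrability hypotheses on $\phi_1,\phi_2$ are assumed verbatim in the theorem, so Lemma~\ref{lem2} gives $\mathcal{L}(w(s))\equiv0$, i.e.\ $w(s)=\overline{\mathfrak{w}}^*$, for all $s\ge\mathcal{T}$, with $\mathcal{T}$ bounded by \eqref{eq_T_bound_lemma}. This bound does not depend on $w_0$, which gives FxT stability.

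\emph{Expected obstacles.} The main delicate point is that $\mathcal{L}$ is smooth but $\theta$ blows up at the equilibrium, so the chain rule must be applied only on the open set of times where $w(s)\ne\overline{\mathfrak{w}}^*$. A secondary point is the direction of each bound in Lemma~\ref{tr1}\ref{petr1}: because $\lambda_1-1$ and $\lambda_2-1$ have opposite signs, a different side of \ref{petr1} is needed for each term, and this is what produces the asymmetric constants $m_1$ and $m_2$. Finally, the integrals defining $\mathcal{F}_i$ start at $0$ in the statement but at $s_0$ in Lemma~\ref{lem2}; I would set $s_0=0$ to reconcile them.
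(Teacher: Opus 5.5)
Your proposal is correct and follows the paper's proof essentially step for step. The paper also establishes well-posedness via Lemma~\ref{tr1}(i), Remark~\ref{rem3} and the cited existence--uniqueness proposition, uses the Lyapunov function $\frac12\|w-\overline{\mathfrak{w}}^*\|^2$, applies Lemma~\ref{tr1}(iv) together with opposite sides of Lemma~\ref{tr1}(v) for the $\lambda_1$ and $\lambda_2$ terms to obtain the same $m_1,m_2,\vartheta_1,\vartheta_2$, and then invokes Lemma~\ref{lem2}.
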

\begin{proof}
The operator 
\(
\mathscr{R}_{\mu \mathcal{P}} \circ (\mathcal{I} - \mu \mathcal{Q})
\)  holds (i) condition of Lemma~\ref{tr1}, 
Moreover, by Theorem~\ref{rem2}, the IP admits a unique solution. In addition, Remark~\ref{rem3} ensures that the assumptions of Proposition~\ref{unique sol da sy} are satisfied. Consequently, the TVCNDM \eqref{INFxTM} admits a unique global solution for every initial condition and is well defined for all forward times.\\
\noindent
Consider the Lyapunov function candidate
\begin{equation}\label{THL1}
\mathcal{L}(s,w(s))=\frac{1}{2}\|w-{\overline{\mathfrak{w}}}^*\|^2.
\end{equation}
Then $\mathcal{L}(w)=0 \iff w={\overline{\mathfrak{w}}}^*$ and $\mathcal{L}(w)\to\infty$ as $\|w\|\to\infty$. Taking the time derivative of \eqref{THL1}, we obtain
\begin{align}
\notag\dot{\mathcal{L}}(s,w(s))
&=
\left\langle w-{\overline{\mathfrak{w}}}^*,\dot w\right\rangle
\\
&=
-
\left(
\phi_1(s)\frac{1}{\|w-\Psi(w)\|^{1-\lambda_1}}
+
\phi_2(s)\frac{1}{\|w-\Psi(w)\|^{1-\lambda_2}}
\right)
\cdot
\langle w-{\overline{\mathfrak{w}}}^*, w-\Psi(w)\rangle.
\end{align}
From Lemma \ref{tr1}, we have
\[
\langle w-{\overline{\mathfrak{w}}}^*, w-\Psi(w)\rangle
\ge
(1- \aleph) \|w-{\overline{\mathfrak{w}}}^*\|^2 \] and \[(1- \aleph)\|w-{\overline{\mathfrak{w}}}^*\|\leq\|w-\Psi(w)\|
\le
\mathcal (1+\aleph) \|w-{\overline{\mathfrak{w}}}^*\|.
\]
Thus,
\begin{align}\label{THL2}
\dot{\mathcal{L}}(s,w(s))
&\le
-
(1- \aleph) \phi_1(s)
\frac{\|w-{\overline{\mathfrak{w}}}^*\|^2}
{\mathcal (1+\aleph)^{1-\lambda_1}\|w-{\overline{\mathfrak{w}}}^*\|^{1-\lambda_1}}
-
(1- \aleph) \phi_2(s)
\frac{\|w-{\overline{\mathfrak{w}}}^*\|^2}
{(1- \aleph)^{1-\lambda_2}\|w-{\overline{\mathfrak{w}}}^*\|^{1-\lambda_2}}
\\
&=
-
\frac{(1- \aleph)}{\mathcal (1+\aleph)^{1-\lambda_1}}
\phi_1(s)
\|w-{\overline{\mathfrak{w}}}^*\|^{1+\lambda_1}
-
(1- \aleph)^{\lambda_2} \phi_2(s)
\|w-{\overline{\mathfrak{w}}}^*\|^{1+\lambda_2}.
\end{align}
Combining \eqref{THL1} and \eqref{THL2}, we obtain
\begin{align}\label{ThL3}
\dot{\mathcal{L}}(s,w(s))
&\le
-
2^{\frac{1+\lambda_1}{2}}
\frac{(1- \aleph)}{\mathcal (1+\aleph)^{1-\lambda_1}}
\phi_1(s)
{\mathcal{L}}^{\frac{1+\lambda_1}{2}}
-
2^{\frac{1+\lambda_2}{2}}
(1- \aleph)^{\lambda_2}
\phi_2(s)
{\mathcal{L}}^{\frac{1+\lambda_2}{2}}.
\end{align}
Let
\(
\vartheta_1=\frac{1+\lambda_1}{2},\quad
\vartheta_2=\frac{1+\lambda_2}{2},
\)
\(
m_1=
2^{\vartheta_1}
\frac{(1- \aleph)}{\mathcal (1+\aleph)^{1-\lambda_1}},
\) and \(
m_2=
2^{\vartheta_2}(1- \aleph)^{\lambda_2}.
\) Then inequality \eqref{ThL3}, becomes
\[
\dot{\mathcal{L}}
\le
-
m_1 \phi_1(s) {\mathcal{L}}^{\vartheta_1}
-
m_2 \phi_2(s) {\mathcal{L}}^{\vartheta_2}.
\]
Applying Lemma \ref{lem2} with define
\(
\mathcal{F}_1(s) = \int_{s_0}^{s} \phi_1(\varkappa)\,d{\varkappa},
\
\mathcal{F}_2(s) = \int_{s_0}^{s} \phi_2(\varkappa)\,d{\varkappa}
\). This establishes the FxT convergence of trajectory $w(s)$ to ${\overline{\mathfrak{w}}}^*$
, with the settling-time bound given by \eqref{eq_T_bound_lemma}.
\end{proof}
Next, we derive several settling-time bounds corresponding to different choices of the time-varying coefficient functions in \eqref{INFxTM}.

\begin{corollary}\label{IncCL1}
Suppose assumption $\eqref{ICAsmmain}$ hold. Let
\(
\vartheta_1=\frac{1+\lambda_1}{2},\quad
\vartheta_2=\frac{1+\lambda_2}{2},
\)
\(
m_1=
2^{\vartheta_1}
\frac{(1- \aleph)}{\mathcal (1+\aleph)^{1-\lambda_1}},
\) and \(
m_2=
2^{\vartheta_2}(1- \aleph)^{\lambda_2}.
\)
where $\varpi_0>0$, $ \varrho_0>0$, and $\tau>0$.

\begin{enumerate}[label=\textnormal{(\roman*)}]

\item If one selects $\phi_1(s)=\varpi_0$ and $\phi_2(s)= \varrho_0$ in TVCNDM~\eqref{INFxTM}, then, $w(s)$ reaches the equilibrium point ${\overline{\mathfrak{w}}}^*$ in FxT, with the settling-time bound given by
\begin{equation}\label{T1}
T \le 
\frac{1}{\varpi_0 m_1 (1-\vartheta_1)}
+
\frac{1}{ \varrho_0 m_2 (\vartheta_2-1)}.
\end{equation}

\item If one selects 
\(
\phi_1(s)=\frac{\varpi_0}{t+\tau} \ \text{and}\ 
\phi_2(s)=\frac{ \varrho_0}{t+\tau}
\)
in TVCNDM~\eqref{INFxTM}, then, $w(s)$ reaches the equilibrium point ${\overline{\mathfrak{w}}}^*$ in FxT, with the settling-time bound given by
\begin{equation}\label{T2}
T \le 
\tau \exp\!\left(
\frac{1}{\varpi_0 m_1 (1-\vartheta_1)}
+
\frac{1}{ \varrho_0 m_2 (\vartheta_2-1)}
\right)
-\tau.
\end{equation}

\item If one selects 
\(
\phi_1(s)=\varpi_0 
\ \text{and} \
\phi_2(s)=\frac{ \varrho_0}{t+\tau}
\)
in TVCNDM~\eqref{INFxTM}, then, $w(s)$ reaches the equilibrium point ${\overline{\mathfrak{w}}}^*$ in FxT, with the settling-time bound given by
\begin{equation}\label{T3}
T \le 
\tau \exp\!\left(
\frac{1}{ \varrho_0 m_2 (\vartheta_2-1)}
\right)
+
\frac{1}{\varpi_0 m_1 (1-\vartheta_1)}
-\tau.
\end{equation}

\item If one selects 
\(
\phi_1(s)=\frac{\varpi_0}{(s+\tau)^{\alpha_1}}
\ \text{and}
\ \phi_2(s)=\frac{ \varrho_0}{(s+\tau)^{\alpha_2}},
\)
where $\alpha_1 \neq 1$ and $\alpha_2 \neq 1$, s. t.
\[
\alpha=
\frac{1-\alpha_2}{ \varrho_0 m_2 (\vartheta_2-1)}
+
\tau^{1-\alpha_2}
>0,
\quad
\text{and}
\quad
\alpha^{\frac{1-\alpha_1}{1-\alpha_2}}
+
\frac{1-\alpha_1}{\varpi_0 m_1 (1-\vartheta_1)}
>0,
\]
then, $w(s)$ reaches the equilibrium point ${\overline{\mathfrak{w}}}^*$ in $\mathrm{FxT}$, with the settling-time bound given by
\begin{equation}\label{T4}
T \le 
\left(
\alpha^{\frac{1-\alpha_1}{1-\alpha_2}}
+
\frac{1-\alpha_1}{\varpi_0 m_1 (1-\vartheta_1)}
\right)^{\!\frac{1}{1-\alpha_1}}
-\tau.
\end{equation}

\end{enumerate}
\end{corollary}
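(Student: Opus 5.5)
The approach is to treat each item of Corollary~\ref{IncCL1} as a direct specialization of Theorem~\ref{tr2}. The proof of Theorem~\ref{tr2} already yields the differential inequality $\dot{\mathcal L}\le -m_1\phi_1(s)\mathcal L^{\vartheta_1}-m_2\phi_2(s)\mathcal L^{\vartheta_2}$ with the stated $m_1,m_2,\vartheta_1,\vartheta_2$. For each choice of $\phi_1,\phi_2$ it therefore suffices to do three things. First, compute $\mathcal F_1,\mathcal F_2$ explicitly. Second, check the two integrability hypotheses of Lemma~\ref{lem2}. Third, evaluate the bound \eqref{eq_T_bound_lemma}, namely $\mathcal T\le \mathcal F_1^{-1}\big(\mathcal F_1(\mathcal F_2^{-1}(c_2))+c_1\big)$. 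Here we abbreviate $c_2=\frac{1}{m_2(\vartheta_2-1)}$ and $c_1=\frac{1}{m_1(1-\vartheta_1)}$. Throughout we take $s_0=0$, consistent with the definition of $\mathcal F_i$ in Theorem~\ref{tr2}, and read $t$ as $s$ in (ii)--(iii).

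\textbf{Item (i).} We have $\mathcal F_1(s)=\varpi_0 s$ and $\mathcal F_2(s)=\varrho_0 s$. Both integrals diverge, so the hypotheses hold trivially. Then $\mathcal F_2^{-1}(c_2)=c_2/\varrho_0$, and $\mathcal F_1^{-1}(\varpi_0 c_2/\varrho_0+c_1)=c_2/\varrho_0+c_1/\varpi_0$, which is \eqref{T1}.

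\textbf{Item (ii).} Here $\mathcal F_i(s)=k_i\ln\frac{s+\tau}{\tau}$, with $k_1=\varpi_0$ and $k_2=\varrho_0$. These diverge, and $\mathcal F_i^{-1}(y)=\tau e^{y/k_i}-\tau$. Hence $\mathcal F_2^{-1}(c_2)+\tau=\tau e^{c_2/\varrho_0}$ and $\mathcal F_1$ of it equals $\varpi_0 c_2/\varrho_0$. Adding $c_1$ and inverting gives $\tau\exp(c_2/\varrho_0+c_1/\varpi_0)-\tau$, which is \eqref{T2}.

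\textbf{Item (iii).} This mixes the two cases. First, $\mathcal F_2^{-1}(c_2)=\tau e^{c_2/\varrho_0}-\tau$. Applying $\mathcal F_1(s)=\varpi_0 s$ and adding $c_1$, then inverting by dividing by $\varpi_0$, gives $\tau e^{c_2/\varrho_0}-\tau+c_1/\varpi_0$. This is \eqref{T3}.

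\textbf{Item (iv).} This is the only step with real content. We have
\[
\mathcal F_i(s)=\frac{k_i}{1-\alpha_i}\big((s+\tau)^{1-\alpha_i}-\tau^{1-\alpha_i}\big).
\]
The integrability conditions of Lemma~\ref{lem2} must now be checked rather than being automatic. If $\alpha_i<1$ the integrals diverge. If $\alpha_i>1$ the integral is finite, equal to $k_i\tau^{1-\alpha_i}/(\alpha_i-1)$. The condition $c_2<\int_0^\infty\phi_2$ is then exactly $\alpha=\frac{(1-\alpha_2)c_2}{\varrho_0}+\tau^{1-\alpha_2}>0$.

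With that in hand, solving $\mathcal F_2(s)=c_2$ gives $(s+\tau)^{1-\alpha_2}=\alpha$, so that $\mathcal F_2^{-1}(c_2)=\alpha^{1/(1-\alpha_2)}-\tau$. Substituting into $\mathcal F_1$ gives $\frac{\varpi_0}{1-\alpha_1}\big(\alpha^{\frac{1-\alpha_1}{1-\alpha_2}}-\tau^{1-\alpha_1}\big)$. Adding $c_1$ and inverting $\mathcal F_1$ gives
\[
\Big(\alpha^{\frac{1-\alpha_1}{1-\alpha_2}}+\frac{(1-\alpha_1)c_1}{\varpi_0}\Big)^{\frac{1}{1-\alpha_1}}-\tau.
\]
The second stated positivity condition is precisely what makes this inversion well defined, i.e.\ that the argument lies in the range of $\mathcal F_1$; this is equivalent to the second hypothesis of Lemma~\ref{lem2}. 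This is \eqref{T4}.

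The main obstacle I expect is this bookkeeping in item (iv): matching the paper's positivity conditions to the hypotheses of Lemma~\ref{lem2} when $\alpha_i>1$, and tracking the sign of $1-\alpha_i$. Both conditions must be shown to be equivalent, or at least sufficient, in each sign case.
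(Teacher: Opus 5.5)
Your proposal is correct and follows the intended route. The paper states Corollary~\ref{IncCL1} without a separate proof, as a direct specialization of Theorem~\ref{tr2} via Lemma~\ref{lem2}, and your explicit computations of $\mathcal F_1,\mathcal F_2$, their inverses, and the resulting bounds reproduce \eqref{T1}--\eqref{T4} exactly. Your handling of item (iv) is also right: for $\alpha_2>1$ (resp.\ $\alpha_1>1$) the conditions $\alpha>0$ and $\alpha^{\frac{1-\alpha_1}{1-\alpha_2}}+\frac{1-\alpha_1}{\varpi_0 m_1(1-\vartheta_1)}>0$ are equivalent to the two integrability hypotheses of Lemma~\ref{lem2}, while for $\alpha_i<1$ they hold automatically.
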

By Corollary~\ref{IncCL1}, the settling-time bounds are functions of the fixed values $\lambda_1$, $\lambda_2$, $\Pi_{\mathcal{P}}$, $\Pi_{\mathcal{Q}}$, $\xi$, $\mu$, $\phi_1(s)$, and $\phi_2(s)$.

\section{Disturbance Robustness Analysis}\label{ICFxTsec4}
In this section, we show that the $\mathrm{FxT}$ convergence of TVCNDM~\eqref{INFxTM} is preserved in the presence of a bounded external disturbance $\mathcal{D}:\mathscr{H} \to \mathscr{H}$. 
\begin{equation}
\label{eqDistr}
\dot{w}
=
-
\left(
\frac{\phi_1(s)}{\|w-\Psi(w)\|^{1-\lambda_1}}
+
\frac{\phi_2(s)}{\|w-\Psi(w)\|^{1-\lambda_2}}
\right)
(w-\Psi(w))
+
\mathcal{D}(w),
\end{equation}

\begin{assumption}
\label{assINDIs}
For some constant $\delta>0$ and $w\ne\overline{\mathfrak{w}}^{*}$, we have
\[
\|\mathcal{D}(w)\|<\delta\|w-\overline{\mathfrak{w}}^{*}\|,
\quad \forall\, w\in\mathscr{H}.
\]
\end{assumption}

\begin{theorem} Under Assumption~\ref{ICAsmmain} and \ref{assINDIs} hold, Let
	
	\(\min\left\{
\frac{(1- \aleph)}{\mathcal (1+\aleph)^{1-\lambda_1}}\phi_1(s),
\;
(1- \aleph)^{\lambda_2}\phi_2(s)
\right\} > q,
\
\frac{1}{m_2(\vartheta_2 - 1)} 
<
\int_{s_0}^{+\infty} \phi_2(\varkappa)\,d{\varkappa},
\ \text{and} \ \mathcal{F}_1 \circ \mathcal{F}_2^{-1}
\!\left(
\frac{1}{m_2(\vartheta_2 - 1)}
\right)
+
\frac{1}{m_1(1-\vartheta_1)}<\int_{s_0}^{+\infty} \phi_1(\varkappa)\,d{\varkappa},
\)
where
\(m_1=
2^{\vartheta_1}
\frac{(1- \aleph)}{\mathcal (1+\aleph)^{1-\lambda_1}}
\), \(
m_2=
2^{\vartheta_2}(1- \aleph)^{\lambda_2}
\) for some $
\aleph\in(0,1)$, and $\vartheta_i = \frac{1+\lambda_i}{2}, \ i=1,2$ with \(
\mathcal{F}_1(s)
=
\int_0^s
\left(
\frac{(1- \aleph)}{\mathcal (1+\aleph)^{1-\lambda_1}} \phi_1(\varkappa) -\delta
\right) d{\varkappa},
\
\mathcal{F}_2(s)
=
\int_0^s
\left(
(1- \aleph)^{\lambda_2} \phi_2(\varkappa) -\delta
\right) d{\varkappa}.
\) Then the trajectory $w(s)$ of \eqref{eqDistr} converges to ${\overline{\mathfrak{w}}}^*$ in $\mathrm{FxT}$.
\end{theorem}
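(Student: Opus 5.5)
The plan is to reuse the Lyapunov argument of Theorem~\ref{tr2}, with the same candidate $\mathcal{L}(s,w(s))=\frac12\|w-\overline{\mathfrak{w}}^*\|^2$. The disturbance is absorbed into the time-varying gains, and then Lemma~\ref{lem2} is applied with modified coefficient functions.

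\textbf{Well-posedness.} By Theorem~\ref{rem2} and Remark~\ref{rem3}, the equilibrium $\overline{\mathfrak{w}}^*$ is unique. I would assume (or argue, as the paper implicitly does via Proposition~\ref{unique sol da sy}) that \eqref{eqDistr} has a global solution, since $\mathcal{D}$ is bounded by $\delta\|w-\overline{\mathfrak{w}}^*\|$.

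\textbf{Derivative estimate.} Differentiating along trajectories of \eqref{eqDistr} gives
\[
\dot{\mathcal{L}}=-\theta(w,s)\langle w-\overline{\mathfrak{w}}^*,w-\Psi(w)\rangle+\langle w-\overline{\mathfrak{w}}^*,\mathcal{D}(w)\rangle .
\]
The first term is bounded exactly as in \eqref{THL2}, using parts \ref{pdtr1} and \ref{petr1} of Lemma~\ref{tr1}. Writing $a(s)=\frac{1-\aleph}{(1+\aleph)^{1-\lambda_1}}\phi_1(s)$ and $b(s)=(1-\aleph)^{\lambda_2}\phi_2(s)$, this yields $-a(s)\|e\|^{1+\lambda_1}-b(s)\|e\|^{1+\lambda_2}$, where $e=w-\overline{\mathfrak{w}}^*$. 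By Cauchy--Schwarz and Assumption~\ref{assINDIs}, the disturbance term is at most $\delta\|e\|^2$.

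\textbf{Absorbing the disturbance.} The key step is to split $\delta\|e\|^2$ between the two terms, using the case distinction $\|e\|\le1$ versus $\|e\|\ge1$.
\begin{itemize}
\item If $\|e\|\le 1$, then $\|e\|^2\le\|e\|^{1+\lambda_1}$ because $1+\lambda_1<2$. The disturbance is then charged to the first term, giving $-(a(s)-\delta)\|e\|^{1+\lambda_1}-b(s)\|e\|^{1+\lambda_2}$.
\item If $\|e\|\ge1$, then $\|e\|^2\le\|e\|^{1+\lambda_2}$, and the disturbance is charged to the second term instead.
\end{itemize}
To get one inequality valid in both regimes I would use $\|e\|^2\le \|e\|^{1+\lambda_1}+\|e\|^{1+\lambda_2}$, which holds for all $\|e\|$. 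This gives
\[
\dot{\mathcal{L}}\le -(a(s)-\delta)\|e\|^{1+\lambda_1}-(b(s)-\delta)\|e\|^{1+\lambda_2}.
\]
The hypothesis $\min\{a(s),b(s)\}>q$ is meant to guarantee that both coefficients are positive, which requires $q\ge\delta$. I would make this explicit, reading $q$ as $\delta$.

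\textbf{Applying Lemma~\ref{lem2}.} Converting to $\mathcal{L}$ via $\|e\|=(2\mathcal{L})^{1/2}$ gives
\[
\dot{\mathcal{L}}\le -2^{\vartheta_1}(a(s)-\delta)\mathcal{L}^{\vartheta_1}-2^{\vartheta_2}(b(s)-\delta)\mathcal{L}^{\vartheta_2}.
\]
This is the form required by Lemma~\ref{lem2} with $m_1=m_2=1$ (or a rescaled choice) and positive continuous weights $\tilde\phi_1=2^{\vartheta_1}(a-\delta)$, $\tilde\phi_2=2^{\vartheta_2}(b-\delta)$. Their primitives match the stated $\mathcal{F}_1,\mathcal{F}_2$ up to the constant factors $2^{\vartheta_i}$. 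The integral hypotheses of the theorem then give $\mathcal{L}\equiv0$ after the time bound \eqref{eq_T_bound_lemma}.

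\textbf{Main obstacle.} The hardest step is reconciling constants: the statement mixes $m_i$, which already contain $a,b$, with $\mathcal{F}_i$, which also contain $a-\delta$ and $b-\delta$. I would first normalize so that Lemma~\ref{lem2} is applied with the weights being exactly the integrands of $\mathcal{F}_i$ and with $m_i=2^{\vartheta_i}$. I would then check that the stated integral conditions imply the lemma's, adjusting the constant factors if necessary.
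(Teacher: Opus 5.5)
Up to the differential inequality your argument is the paper's. You use the same Lyapunov function, and your estimate $\|e\|^2\le\|e\|^{1+\lambda_1}+\|e\|^{1+\lambda_2}$ is a compact version of the paper's Case~1/Case~2 split. Both give
\[
\dot{\mathcal{L}}\le-2^{\vartheta_1}\big(a(s)-\delta\big)\mathcal{L}^{\vartheta_1}-2^{\vartheta_2}\big(b(s)-\delta\big)\mathcal{L}^{\vartheta_2}.
\]
The gap is the step you postpone: the claim that the stated integral hypotheses imply those of Lemma~\ref{lem2} for the weights $a-\delta$, $b-\delta$. They do not, and no adjustment of constant factors fixes this, since the lemma's conditions are invariant under $m_i\to cm_i$, $\phi_i\to\phi_i/c$.

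With your reading $q=\delta$, the condition $a(s)>\delta$ forces $\phi_1(s)>\delta(1+\aleph)^{1-\lambda_1}/(1-\aleph)$, and similarly $b(s)>\delta$ forces a positive lower bound on $\phi_2$. Hence $\int_{s_0}^{\infty}\phi_1\,d\varkappa=\int_{s_0}^{\infty}\phi_2\,d\varkappa=+\infty$, and both stated inequalities hold automatically whenever their left-hand sides are defined. They therefore say nothing about $\int_{s_0}^{\infty}(a-\delta)\,d\varkappa$, which is exactly what the second condition of Lemma~\ref{lem2} controls.

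Concretely, take $a(s)=\delta+\varepsilon e^{-s}$ with $b-\delta$ a positive constant. This satisfies every stated hypothesis, yet $\int_{s_0}^{\infty}(a-\delta)\,d\varkappa=\varepsilon e^{-s_0}$ is smaller than $\tfrac{1}{2^{\vartheta_1}(1-\vartheta_1)}$ for small $\varepsilon$. So Lemma~\ref{lem2} cannot be invoked. The paper's proof has the same hole: it relabels the integrands as $\mathcal{F}_i$ and simply cites Lemma~\ref{lem2}.

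One repair is to read the hypothesis as a uniform margin, i.e. a constant $q>\delta$. Then $a-\delta\ge q-\delta>0$ and $b-\delta\ge q-\delta>0$ on $[s_0,\infty)$. Apply Lemma~\ref{lem2} with $m_i=2^{\vartheta_i}$ and weights $a-\delta$, $b-\delta$, taking primitives from $s_0$. Both integrals are $+\infty$, $\mathcal{F}_2$ is a bijection onto $[0,\infty)$, and the lemma's conditions hold automatically. This gives the initial-condition-free bound $\mathcal{T}\le\mathcal{F}_1^{-1}\big(\mathcal{F}_1\circ\mathcal{F}_2^{-1}\big(\tfrac{1}{2^{\vartheta_2}(\vartheta_2-1)}\big)+\tfrac{1}{2^{\vartheta_1}(1-\vartheta_1)}\big)$. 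Alternatively, keep $q=\delta$ and assume these two conditions of Lemma~\ref{lem2} for $a-\delta$, $b-\delta$ directly, in place of the stated ones.

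Separately, your well-posedness step is too quick. The growth bound in Assumption~\ref{assINDIs} rules out blow-up but does not give existence, and Proposition~\ref{unique sol da sy} does not cover the perturbed field. You need at least continuity of $\mathcal{D}$. That also gives $\mathcal{D}(\overline{\mathfrak{w}}^*)=0$, so $\overline{\mathfrak{w}}^*$ remains an equilibrium of \eqref{eqDistr}.
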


\begin{proof}
Differentiating the function \eqref{THL1} along the model trajectories. Then by Assumption~\eqref{assINDIs} yields
\begin{equation}
    \begin{split}
        \dot{\mathcal{L}}(s,w(s))
&=
\left\langle w-{\overline{\mathfrak{w}}}^*,\dot w\right\rangle
\\
&=
-
\left(
\phi_1(s)\frac{1}{\|w-\Psi(w)\|^{1-\lambda_1}}
+
\phi_2(s)\frac{1}{\|w-\Psi(w)\|^{1-\lambda_2}}
\right)
\cdot
\langle w-{\overline{\mathfrak{w}}}^*, w-\Psi(w)\rangle
\\&+ \delta\|w-{\overline{\mathfrak{w}}}^*\|^2.
    \end{split}
\end{equation}
Using Lemma \ref{tr1}, we get 
\begin{equation}\label{DTHL2}
    \begin{split}
        \dot{\mathcal{L}}(s,w(s))
&\le
-
(1- \aleph) \phi_1(s)
\frac{\|w-{\overline{\mathfrak{w}}}^*\|^2}
{\mathcal (1+\aleph)^{1-\lambda_1}\|w-{\overline{\mathfrak{w}}}^*\|^{1-\lambda_1}}
-
(1- \aleph) \phi_2(s)
\frac{\|w-{\overline{\mathfrak{w}}}^*\|^2}
{(1- \aleph)^{1-\lambda_2}\|w-{\overline{\mathfrak{w}}}^*\|^{1-\lambda_2}}\\
&\qquad+ \delta\|w-{\overline{\mathfrak{w}}}^*\|^2.
\\
&=
-
\frac{(1- \aleph)}{\mathcal (1+\aleph)^{1-\lambda_1}}
\phi_1(s)
\|w-{\overline{\mathfrak{w}}}^*\|^{1+\lambda_1}
-
(1- \aleph)^{\lambda_2} \phi_2(s)
\|w-{\overline{\mathfrak{w}}}^*\|^{1+\lambda_2}\\
&\qquad+ \delta\|w-{\overline{\mathfrak{w}}}^*\|^2.
    \end{split}
\end{equation}
If we suppose
\(
\Xi := \{w \mid \|w-{\overline{\mathfrak{w}}}^*\|\le 1\}.
\) Then it is form two cases.\\

\medskip
\noindent
\textbf{Case 1:} If $w\in \Xi$. Since $0<\lambda_1<1$ with $\|w-{\overline{\mathfrak{w}}}^*\|\le1$, it concluded that
\[
\|w-{\overline{\mathfrak{w}}}^*\|^2 - \|w-{\overline{\mathfrak{w}}}^*\|^{1+\lambda_1} \le 0.
\]
Thus,
\begin{equation}\label{ineq13}
\dot{\mathcal{L}}(s,w(s))
\le
- N_1 \|w-{\overline{\mathfrak{w}}}^*\|^{1+\lambda_1}
- N_2 \|w-{\overline{\mathfrak{w}}}^*\|^{1+\lambda_2},
\end{equation}
where
\(
N_1 =\frac{(1- \aleph) \phi_1(s)}{\mathcal (1+\aleph)^{1-\lambda_1}}-\delta,
\) and \(
N_2=
(1- \aleph)^{\lambda_2} \phi_2(s)-\delta.
\) \\

\noindent
\textbf{Case 2:} If $w \notin \Xi$. Since $\lambda_2>1$ with $\|w-{\overline{\mathfrak{w}}}^*\|>1$, it concluded that
\[
\|w-{\overline{\mathfrak{w}}}^*\|^2 - \|w-{\overline{\mathfrak{w}}}^*\|^{1+\lambda_2} \le 0.
\]
Thus, inequality \eqref{ineq13} is valid for both cases. Consequently, we obtain
\begin{equation*}
\dot{\mathcal{L}}(s,w(s))
\le
-
\left(\frac{(1- \aleph)}{\mathcal (1+\aleph)^{1-\lambda_1}}
\phi_1(s)-\delta\right)
\|w-{\overline{\mathfrak{w}}}^*\|^{1+\lambda_1}
-
\left((1- \aleph)^{\lambda_2}\phi_2(s)-\delta\right)
\|w-{\overline{\mathfrak{w}}}^*\|^{1+\lambda_2}.
\end{equation*}
Equivalently, in terms of L, we have
\[
\dot{\mathcal{L}}(s,w(s))
\le
- 2^{\frac{1+\lambda_1}{2}}
\mathcal{F}_1(s) {\mathcal{L}}^{\frac{1+\lambda_1}{2}}
-
2^{\frac{1+\lambda_2}{2}}
\mathcal{F}_2(s) {\mathcal{L}}^{\frac{1+\lambda_2}{2}}.
\]
where $\mathcal{F}_1=\frac{(1- \aleph)}{\mathcal (1+\aleph)^{1-\lambda_1}}
\phi_1(s)-\delta$ and $\mathcal{F}_2=(1- \aleph)^{\lambda_2}\phi_2(s)-\delta$. Therefore, it follows from Lemma~\ref{lem2} that $w(s)$ converges to 
${\overline{\mathfrak{w}}}^*$ within  $\mathrm{FxT}$.
\end{proof}

\section{Illustrative Examples}\label{ICFxTsec5}
To demonstrate effectiveness of proposed $\mathrm{FxT}$ model~\eqref{INFxTM}, we consider the following IP \eqref{inc1}
\[
\mathcal{P}= 2\mathcal{I}, 
\qquad 
\mathcal{Q}(w) = M^{\top} M w,
\]
where $\mathcal{I}$ with size $n$ and 
$M \in \mathbb{R}^{m \times n}$ is a randomly generated matrix. It is straightforward to verify that the inclusion
\(
0 \in \mathcal{P}(w) + \mathcal{Q}(w)
\)
admits the unique solution ${\overline{\mathfrak{w}}}^* = 0$. Here, we choose
\(
m = 10, 
\ 
n = 8, 
\ 
\mu = 0.004,
\)
and the exponents
\(
\lambda_1 = 0.79,
\
\lambda_2 = 1.2,
\)
which satisfy $0 < \lambda_1 < 1$ and $\lambda_2 > 1$. The time-varying coefficient functions are selected as
\[
\phi_1(s) = 10 \big(1 + 0.2 \sin(4\pi t)\big),
\quad
\phi_2(s) = 190 \big(1 + 0.2 \cos(4\pi t)\big).
\]
Since $\mathcal{P}= 2\mathcal{I}$, the resolvent operator is given by
\(
\mathscr{R}_{\mu \mathcal{P}}(w)
=
(\mathcal{I} + \mu \mathcal{P})^{-1} w
=
\frac{1}{1 + 2\mu} w.
\)
Consequently,
\(
\Psi(w)
=
\mathscr{R}_{\mu \mathcal{P}}(w - \mu \mathcal{Q}(w))
=
\frac{1}{1 + 2\mu}
\big(
w - \mu M^{\top} M w
\big).
\) We simulate the TVCNDM \eqref{INFxTM} is solved using a stiff solver over the interval 
$[0,T_{\max}]$ with $T_{\max} = 4$.

Figure~\ref{figTVInc} illustrates the transient responses of the state variables $w_i(s)$ for the proposed TVCNDM \eqref{INFxTM}. The trajectories of all state components rapidly approach to ${\overline{\mathfrak{w}}}^*$. The logarithmic convergence behavior of the error  $\|w_i(s) - \overline{\mathfrak{w}}_i^*\|$ is shown in Figure~\ref{figTVincErr}, thereby illustrating convergence performance of proposed TVCNDM \eqref{INFxTM}.
\begin{figure}[H]
	\centering
	\begin{subfigure}[b]{0.48\linewidth}
		\centering
		\includegraphics[width=\linewidth]{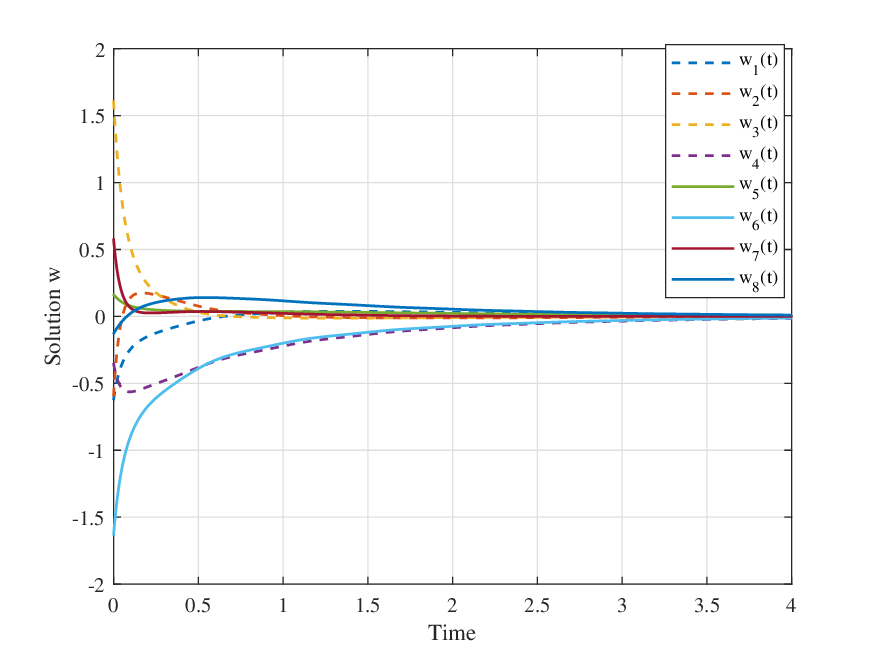}
		\caption{}
		\label{figTVInc}
	\end{subfigure}
	\hfill
	\begin{subfigure}[b]{0.48\linewidth}
		\centering
		\includegraphics[width=\linewidth]{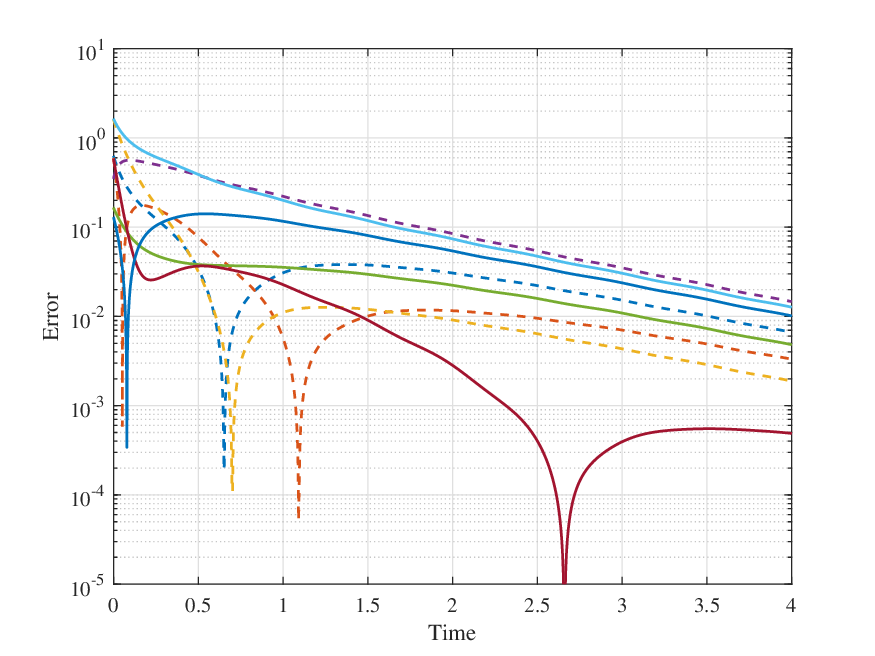}
		\caption{}
		\label{figTVincErr}
	\end{subfigure}
	\caption{(a) State trajectories; (b) Error convergence of TVCNDM \eqref{INFxTM}.}
	\label{figTVcombined}
\end{figure}

\subsection{Illustrate with constant coefficient}
By setting $\phi_1(s)=\varpi_0=15$ and $\phi_2(s)= \varrho_0=110$ in \eqref{INFxTM}, the TVCNDM reduces to the constant-gain dynamical model described in Corollary~\ref{IncCL1}. 
Figures~\ref{figINcoftrj} and~\ref{figINcofErr} illustrate the transient state trajectories and the corresponding logarithmic error convergence of this constant-gain model.
\begin{figure}[H]
	\centering
	\begin{subfigure}[b]{0.48\linewidth}
		\centering
		\includegraphics[width=\linewidth]{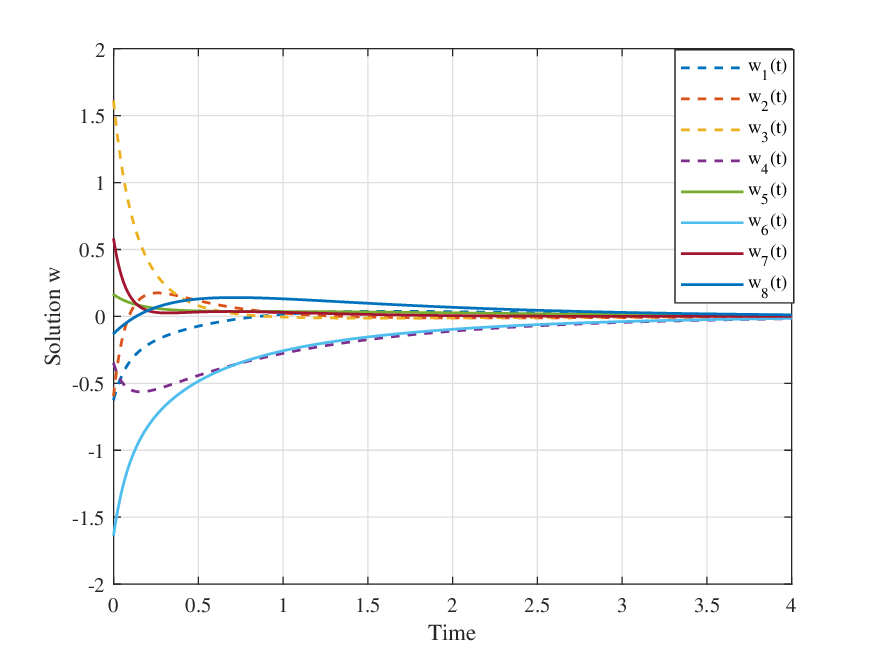}
		\caption{}
		\label{figINcoftrj}
	\end{subfigure}
	\hfill
	\begin{subfigure}[b]{0.48\linewidth}
		\centering
		\includegraphics[width=\linewidth]{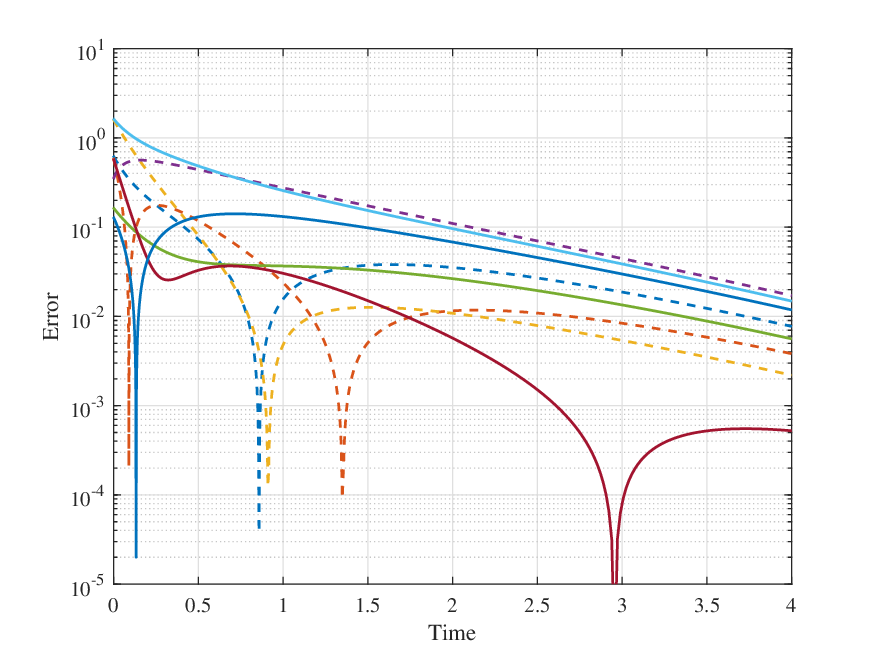}
		\caption{}
		\label{figINcofErr}
	\end{subfigure}
	\caption{(a) State trajectories; (b) Error convergence of TVCNDM \eqref{INFxTM} in the constant-gain case (Corollary \ref{IncCL1} case-1).}
	\label{figINcofcombined}
\end{figure}
\subsection{Error Convergence Analysis}
We consider a comparative study of three neurodynamic models: 
\begin{enumerate}
	\item the $\mathrm{FxT}$ model \eqref{INFxTM} with constant coefficients (Case~1 of Corollary~\ref{IncCL1}),
	\item the $\mathrm{FxT}$ model \eqref{INFxTM} with time-varying coefficients, and
	\item the linear time-varying scaling model \eqref{1stNDS}.
\end{enumerate}
The test problem is the $\ell_1$-regularized quadratic optimization problem
\[
\min_{w \in \mathbb{R}^2} 
\frac{1}{2}\|Aw - b\|^2 + \eta \|w\|_1,
\]
with
\(A= \begin{pmatrix} 3 & 1 \\ 1 & 2 \end{pmatrix},
\
b = \begin{pmatrix} 1 \\ -1 \end{pmatrix},
\
\mu = 0.01,
\
\eta = 0.1.
\)
The initial state is $w_0 = (2,-2)^\top$. The nonlinear exponents are chosen as
\(
0 < \lambda_1 = 0.8 < 1, 
\
\lambda_2 = 1.2 > 1,
\)
ensuring the $\mathrm{FxT}$ structure. For the constant-gain model, we take $\varpi_0=15$ and $\varrho_0=110$.  
For the time-varying case,
\[
\phi_1(s) = 30 + 60(1-e^{-4s}),
\qquad
\phi_2(s) = 30 + 60(1-e^{-4s}),
\]
which are continuous and strictly positive.  
The linear scaling model uses
\(
\sigma(s) = 30 + 60(1-e^{-4s}).
\)
Figure~\ref{figInclusionCom} shows the logarithmic error trajectories 
$\|w(s)-{\overline{\mathfrak{w}}}^*\|$. The $\mathrm{FxT}$ model \eqref{INFxTM} with time-varying coefficient achieves the fastest decay and reaches the prescribed tolerance ($10^{-4}$) in the shortest time.

These results confirm that incorporating continuous time-varying gains significantly enhances transient convergence speed compared to constant-gain and purely linear scaling strategies.
\begin{figure}[H]
	\centering
	\includegraphics[width=0.75\linewidth]{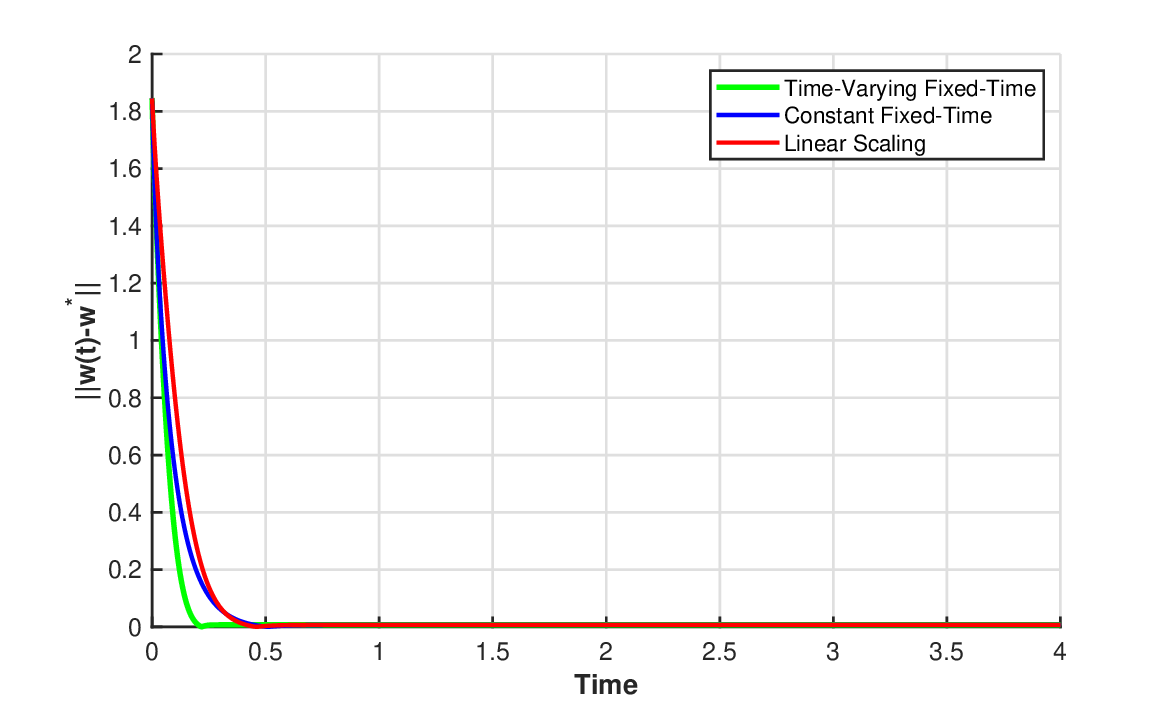}
	\caption{Error convergence comparison with respect to models.}\label{figInclusionCom}
\end{figure}
	
\section{Conclusion}\label{ICFxTsec6}
This paper introduced a TVCNDM for solving the IP \eqref{inc1}. Under a generalized monotonicity framework, existence, uniqueness, and $\mathrm{FxT}$ convergence of model trajectory to solution were established, together with explicit settling-time estimates.

The robustness analysis showed that the proposed TVCNDM maintains $\mathrm{FxT}$ convergence even in the presence of bounded disturbances, ensuring reliable performance. The proposed method was successfully applied numerical simulations validated the theoretical findings, showing fast error convergence and improved transient behavior due to the time-varying design.

Future research will explore the applicability of the proposed framework to stochastic settings, infinite-dimensional spaces, and distributed neurodynamic implementations for large-scale optimization problems.

\section*{CRediT authorship contribution statement}
V. K. Khan: Writing-original draft, writing-review and editing, methodology, investigation, software, conceptualization, visualization, resources; investigation, formal analysis, supervision; Visualization. All authors reviewed and approved the final version of the manuscript prior to publication.

\section*{Conflict of interest}
 No conflicts of interest.

\end{document}